\newcommand{\R}{\mathbb{R}}
\newcommand{\Z}{{\mathbb Z}}
\DeclareMathOperator{\rk}{rk}
\DeclareMathOperator{\im}{im}
\DeclareMathOperator{\diag}{diag}
\renewcommand{\k}{{\kappa}}
\renewcommand{\l}{{\ell}}
\newtheorem{theorem}{Theorem}[section]
\newtheorem{lemma}[theorem]{Lemma}
\newtheorem{proposition}[theorem]{Proposition}
\newtheorem{corollary}[theorem]{Corollary}
\newtheorem{conjecture}[theorem]{Conjecture}
\theoremstyle{definition}
\newtheorem{example}[theorem]{Example}
\begin{document}

	\title[Absence of Hopf Bifurcation]{Network reduction and absence of Hopf Bifurcations in  dual phosphorylation networks with three Intermediates}
	
	\author[E.~Feliu and N.~Kaihnsa]{Elisenda Feliu and Nidhi Kaihnsa}

\tikzset{every node/.style={auto}}
\tikzset{every state/.style={rectangle, minimum size=0pt, draw=none, font=\normalsize}}
\tikzset{bend angle=15}

\maketitle

\begin{abstract}
	Phosphorylation networks, representing the mechanisms by which proteins are phosphorylated at one or multiple sites,  are ubiquitous in cell signalling and display rich dynamics such as unlimited multistability. Dual-site phosphorylation networks are known to exhibit oscillations in the form of periodic trajectories, when phosphorylation and dephosphorylation occurs as a mixed mechanism: phosphorylation of the two sites requires one encounter of the kinase, while dephosphorylation of the two sites requires two encounters with the phosphatase. A still open question is whether a mechanism requiring two encounters for both phosphorylation and dephosphorylation also admits oscillations. 
 In this work we provide evidence in favor of the absence of oscillations of this network by precluding Hopf bifurcations in any reduced network comprising three out of its four intermediate protein complexes. Our argument relies on a novel network reduction step that preserves the absence of Hopf bifurcations, and on a detailed analysis of the semi-algebraic conditions precluding Hopf bifurcations obtained from Hurwitz determinants of the characteristic polynomial of the Jacobian of the system. We conjecture that the removal of certain reverse reactions appearing in Michaelis-Menten-type mechanisms does not have an impact on the presence or absence of Hopf bifurcations. We prove an implication of the conjecture under certain favorable scenarios and support the conjecture with additional example-based evidence.

	\vskip 0.1in
 
	\noindent
	{\bf Keywords:} Dual-phosphorylation network, Hopf bifurcation, Hurwitz matrices, semi-algebraic sets, oscillation.

 \medskip
	\noindent {\bf 2020 MSC:}{ 
	37N25, 
    92C40, 
	92C45, 
	}
	
\end{abstract}

\section{Introduction}

Protein phosphorylation networks are important mechanisms for cell signalling and control \cite{rendall-MAPK,qiao:oscillations}. Typically, phosphorylation networks involve a substrate $S$ that exists in different phosphorylation states, and phosphorylation and dephosphorylation events are catalyzed by one or multiple enzymes. A simple well-studied scenario comprises a single kinase $K$, responsible for catalyzing all phosphorylation events, and a single phosphatase $F$, catalyzing all dephosphorylation events.  
When the substrate has multiple phosphorylation sites, full (de)phosphorylation can proceed by means of different   mechanisms \cite{SH09}, depending for example on whether one encounter of the substrate and the kinase can lead to the phosphorylation of one or multiple sites (e.g. yielding distributive and processive mechanisms), or on whether phosphorylation needs to occur in a specific order or not (e.g. yielding sequential, cyclic, or unordered phosphorylation). 

Under the assumptions of mass-action kinetics, these mechanisms have been extensively studied e.g., \cite{CMS,conradi-shiu-review,CFM, G-distributivity, G-PNAS,rendall-feliu-wiuf,rendall-2site}.
Two main open questions concern, on one hand, the maximal number of positive steady states that the sequential and distributive  phosphorylation cycle with $n$ sites admits \cite{FHC14}, and on the other, the existence of oscillations, and in particular, Hopf bifurcations, in the sequential and distributive dual phosphorylation cycle. In this paper we focus on the latter, and bring forward new ideas to tackle computationally prohibitive approaches.

Specifically, we consider the following reaction network with ordered and distributive phosphorylation:
\begin{align}\label{eq:2sitefullnetwork}
\begin{split}
S_0 + K \ce{<=>} KS_0 \ce{->} S_1+K   \ce{<=>} KS_1 \ce{->} S_2+K \\
S_2 + F  \ce{<=>} FS_2 \ce{->} S_1+F  \ce{<=>} FS_1 \ce{->} S_0+F,
\end{split}
\end{align}
where $S_0,S_1,S_2$ refer to the substrate being phosphorylated in none, the first, or both sites, respectively, $K,F$ are the kinase and phosphatase respectively, and $KS_0,KS_1,FS_2,FS_1$ are protein complexes. This network admits three positive steady states, with two of them being asymptotically stable, for some choices of parameter values, and hence, it is known to exhibit multistationarity \cite{Markevich-mapk,FKWY,Wang:2008dc,rendall-feliu-wiuf}. 
If   the phosphorylation mechanism proceeds processively, that is, as
\[S_0 + K \ce{<=>} KS_0 \ce{->}   KS_1 \ce{->} S_2+K,\]
then the network has a unique positive steady state, and admits Hopf bifurcations (and thereby periodic solutions). 
The latter was first established  in \cite{SK}, and consequently in \cite{CMS}, the authors carry out a detailed algebraic analysis of the parameter space that gives rise to Hopf bifurcations. 
The similarities between the processive and the distributive mechanisms  makes one think that network~\eqref{eq:2sitefullnetwork} also should admit Hopf bifurcations. Furthermore, a two-layer cascade comprising the dual phosphorylation cycle in the second layer is known to admit oscillations \cite{qiao:oscillations}. Finally, 
network~\eqref{eq:2sitefullnetwork} has, apparently, the ingredients to give rise to Hopf bifurcations, as we detail later on, see also  \cite{weber:osc}. Despite this evidence and efforts, Hopf bifurcations have not been found. As a matter of fact, it has been shown in \cite{CFM} that if two intermediate protein complexes among the four $KS_0,KS_1,FS_2,FS_1$ are removed from \eqref{eq:2sitefullnetwork} and all reactions are irreversible, then the network does not have Hopf bifurcations, despite displaying the same key characteristics of  \eqref{eq:2sitefullnetwork}.

A simple Hopf bifurcation arises when a single pair of complex-conjugate eigenvalues of the Jacobian matrix of the system crosses the imaginary axis with a varying parameter. For stability, one often requires the 
remaining eigenvalues to have negative real part as the parameter changes. The question of precluding or establishing a simple Hopf bifurcation in mass-action kinetics translates to checking some semi-algebraic conditions 
due to Liu \cite{Liu}, see also  \cite{yang,GMS,KW}. These conditions are stated in terms of the determinants of the
Hurwitz matrices $H_1,\dots,H_{s-1}$ obtained from the characteristic polynomial of the Jacobian matrix  and its coefficient $a_s$ of the monomial with lowest degree.  The semi-algebraic criterion for finding Hopf bifurcations
requires the existence of parameters and steady states for which  $\det(H_1)>0,\dots,\det(H_{s-2})>0$, $\det(H_{s-1})=0$ and $a_s>0$. This criterion reduces the problem into checking the signs that some polynomials attain simultaneously. For network  \eqref{eq:2sitefullnetwork}, $\det(H_1)>0,\dots,\det(H_{s-2})>0$ hold as these polynomials have only positive coefficients. Additionally,
$\det(H_{s-1})$ and $a_s$ have terms both with positive and negative coefficients, and hence there are no apparent obstructions to achieve $a_s>0$ while $\det(H_{s-1})=0$. However, for the simplified networks with two intermediates explained above, it was shown that   $a_s>0$ implied $\det(H_{s-1})>0$, and hence the semi-algebraic system does not have a solution \cite{CFM}. 

In principle, the approach in \cite{CFM} should extend to the full network \eqref{eq:2sitefullnetwork}, but the computational cost is prohibitive. To bypass the limitations of heavy computations, we introduce a network simplification that preserves the absence of Hopf bifurcations.  We conjecture that reverse reactions in any motif of the form $y\cee{<=>} Y \cee{->} y'$ (of which there are four instances in \eqref{eq:2sitefullnetwork}) play no role in the determination of Hopf bifurcations and can be disregarded. We contribute towards the conjecture by showing that, under certain hypotheses on $y$, $y'$, and the rest of the network,  the absence of Hopf bifurcations in the reduced network implies the absence in the original network (Theorem~\ref{thm:yes}). 

Afterwards, we show that any subnetwork of network~\eqref{eq:2sitefullnetwork} with three intermediate complexes does not admit a Hopf bifurcation (Theorem~\ref{thm:Hopf}). 
As network~\eqref{eq:2sitefullnetwork} displays a symmetry 
 given by  the mapping $(K,F,S_0,S_1,S_2,KS_0,FS_2,KS_1,FS_1)$ to $(F,K,S_2,S_1, S_0,FS_2,KS_0,FS_1,KS_1)$, there are exactly two networks with three intermediate complexes that have to be inspected: 
\begin{itemize}
	\item The network   with intermediates $KS_0,KS_1,$ and $FS_2$:
	\begin{align}\label{eq:3-intmdnetwork2}\tag{$\mathcal{G}_1$}
	\begin{split}
	S_0 + K \ce{<=>} KS_0 \ce{->} S_1+K   \ce{<=>} KS_1 \ce{->} S_2+K \\
	S_2 + F  \ce{<=>} FS_2 \ce{->} S_1+F   \ce{->} S_0+F.
	\end{split}
	\end{align}
	\item The network   with intermediates $KS_0,KS_1,$ and $FS_1$:
	\begin{align}\label{eq:3-intmdnetwork1}\tag{$\mathcal{G}_2$}
	\begin{split}
	S_0 + K \ce{<=>} KS_0 \ce{->} S_1+K  \ce{<=>} KS_1 \ce{->} S_2+K \\
	S_2 + F   \ce{->} S_1+F  \ce{<=>} FS_1 \ce{->} S_0+F.
	\end{split}
	\end{align}
\end{itemize}
Our proof relies on the network reduction in Theorem~\ref{thm:yes} and on symbolic computations performed on a computer. 
Our conclusion gives further evidence for \eqref{eq:2sitefullnetwork}
not admitting a Hopf bifurcation.
 
The article is structured as follows. In Section~\ref{sec:background} we give the background on reaction networks, convex parameters, and Hopf bifurcations, and establish the basic notation. 
In Section~\ref{sec:reductionmotif} we present 
a series of conjectures on reverse reactions of the special motif, state Theorem~\ref{thm:yes} covering a special case of the conjectures, and provide additional evidence to support the conjectures. 
In Section~\ref{sec:Hopfsimplifiednetwork} we show that the networks \eqref{eq:3-intmdnetwork2}, 
\eqref{eq:3-intmdnetwork1}, and all the versions obtained by making some reactions irreversible, do not admit Hopf bifurcations. 
Finally, in Section~\ref{sec:proof}, we prove Theorem~\ref{thm:yes} . The proof is technical and it is therefore kept in a separate section to ease the flow of the paper.

\subsection*{Acknowledgements}
This work has been funded by the Independent Research Fund of Denmark.

\subsection*{Supplementary File.} The computations for the proof of Theorem~\ref{thm:Hopf} can be found in the accompanying \texttt{Maple} files, downloadable from {\small \url{https://github.com/efeliu/HopfBifurcations2site}}.

\section{Background}\label{sec:background}
In this section we start by establishing the basic set up of reaction networks and  revisit the parametrization of the Jacobian matrices of a network evaluated at the steady states using convex parameters. More details can be found for example in \cite{feinberg-book}. Afterwards we review Hurwitz matrices and recall a sufficient condition based on their determinants for the absence of Hopf bifurcations.

\subsection{Reaction networks}\label{sec:network}

Given $n$ \textit{species} $X_1,\dots, X_n$, a complex is a non-negative integer linear combination $\sum_i\alpha_i X_i$ of the species, i.e. $\alpha_i\in \Z_{\geq 0}$ for all $i=1,\dots,n$.  A  (mass-action) \textit{reaction network} is a labelled digraph $G$ with complexes as vertices. The edges of the graph are called \textit{reactions} and are of the form 
\[\sum_{i=1}^n\alpha_{ij} X_i\ce{->[\k_j]}\sum_{i=1}^n\beta_{ij} X_i, \qquad j=1,\dots, r, \]
where $\k_j>0$ for $j=1,\dots,r$ is called a \textit{reaction rate constant}. 
The set of reactions is assumed ordered, and the ordering is implicitly given by the subindex of the label. 
The source complex of a reaction is called the \emph{reactant} and the target complex the \emph{product}. 
We let $N=(\beta_{ij}-\alpha_{ij})\in \Z^{n\times r}$ be the \textit{stoichiometric matrix}, whose columns encode the net-production of the species in each reaction, and 
$B=(\alpha_{ij})\in \Z_{\geq 0}^{n\times r}$ the \emph{reactant} matrix, whose columns  encode the coefficients of the reactant.  
If $\alpha_{kj}=\beta_{kj}$ for all $j$, then 
$X_k$ is said to be a \textit{catalyst}. 

The concentration of species $X_i$ is denoted by $x_i$.  
Under the assumption of mass-action kinetics, the time evolution of the concentrations of the species is determined by a polynomial system of ordinary differential equations (ODE), which depends on  $\k\in \R^r_{>0}$, and is of the form
\begin{align}\label{eq:ODE}
\dot{x}=f_\k(x),  \quad x\in \R^n_{\geq 0}, \qquad \text{where} \quad f_\k(x)=  N \diag(\kappa) x^B.
\end{align}
Here we use the standard notation for the vector $x^B\in \R^{r}$ to be given by $(x^B)_j= \prod_{i=1}^{n}x_i^{\alpha_{ij}}$. 
The trajectories of \eqref{eq:ODE} are confined to parallel translates of $\im(N)$, and   $\R^n_{>0}$ and $\R_{\geq 0}^n$ are forward invariant by \eqref{eq:ODE} \cite{volpert}. 
A \textit{steady state} is a point $x^*\in \R_{\geq 0}^n$ such that 
$f_\k(x^*)=0$.

\subsection{Convex parameters}
For the study of Hopf bifurcations, we  need to consider the eigenvalues of the Jacobian matrix of \eqref{eq:ODE} evaluated at the positive steady states (i.e., steady states in $\R^n_{>0}$). The set of Jacobian matrices for all vectors of reaction rate constants and all positive steady states admits a parametrization in terms of the   \emph{convex parameters}. These parameters were first introduced by Clarke in \cite{Clarke} and are reviewed next.

Given a network $G$ with $n$ species and $r$ reactions,  the set $\ker(N)\cap \R_{\geq 0}^r$ is a convex pointed polyhedral cone, and hence admits a minimal generating set of extreme vectors $E_1,\ldots,E_m$ (which are unique up to scaling). We define  the \emph{extreme matrix} $E\in \R^{r\times m}$ to be the matrix with columns  $E_1,\ldots,E_m$. Although the matrix is not uniquely determined, any two such matrices differ in a rescaling and reordering of the columns. These operations have no effect in what follows, and hence, when referring to ``the extreme matrix'', we implicitly mean that a choice has been made.

For $h=(h_1,\ldots,h_n)\in \R_{> 0}^n$ and $\l=(\l_1,\ldots,\l_m)\in \R_{> 0}^m$, 
we consider the matrix 
\begin{align}\label{eqn:Jac-convex}
J(h,\l):=N \diag(E\l)B^\top\diag(h) \in \R^{n\times m} .
\end{align} 
If $E$ does not have a zero row, then from \cite[Eq (12) and Cor. 7(a)]{TF} follows that 
\begin{equation}\label{eq:convex}  
\{J(h,\l) : h \in \R_{> 0}^n,\ \l\in \R_{> 0}^m \} = \{J_{f_\k}(x) : f_\k(x)=0,\ \k\in \R^r_{>0},\ x\in \R^n_{>0} \},
\end{equation}
where $J_{f_\k}(x)=\Big(\tfrac{\partial f_{\k,i}}{\partial x_j}(x)\Big)$ is the Jacobian matrix of $f_\k$ evaluated at $x$. 
The vector $(h,\l) \in \R^n_{>0}\times\R_{> 0}^m$ is called the \emph{vector of convex parameters} throughout the article. 

In view of \eqref{eq:convex}, to study Hopf bifurcations we will focus on the characteristic polynomial of $J(h,\l)$  for all $h\in \R^n_{>0}$ and $\l \in \R_{> 0}^m$. 
For $s=\rk(N)$, given $(h,\l)$, the characteristic polynomial  $\det(z I_n-J(h,\l))$ of $J(h,\l)$ factors as
\begin{align*}
\det(z I_n-J(h,\l)) =z^{n-s} \big(  z^s+a_1 (h,\l) z^{s-1}+\ldots+a_s(h,\l)\big).
\end{align*}
Here $I_n$ is the identity matrix of size $n$. 
We will thus focus on the following polynomial
\begin{equation}\label{eq:charpoly}
    p_{(h,\l)} := z^s+a_1(h,\l)z^{s-1}+\ldots+a_s(h,\l) \quad \in \R[z],
\end{equation}
which we refer to   as the \emph{reduced characteristic polynomial} of $J(h,\l)$. By construction, the coefficients of $p_{(h,\l)}$ are polynomials  in $h,\l$.

\subsection{Hurwitz determinants}\label{sec:Hurwitz}
A (simple) Hopf bifurcation in a parametric ODE system with one parameter $\mu\in \R$
requires the existence of a parameter value  $\mu_0$ and a steady state $x_0$ such that the Jacobian matrix of the system at $x_0$ has a pair of purely imaginary eigenvalues, while the other eigenvalues have non-zero real part. It is also typically required that the other eigenvalues have negative real part. 
Therefore, one needs to study the roots of the reduced characteristic polynomial of the Jacobian matrix, and we will use a criterion based on the Hurwitz matrices. 
Specifically, for a univariate polynomial $p=a_0z^s+a_1z^{s-1}+\ldots+a_s$ and $i\in \{1,\dots,s\}$, the $i$-th \textit{Hurwitz matrix} is:
		\begin{align}
		H_i:=\begin{bmatrix}
		a_1&a_0&0&0&0&\ldots&0\\
		a_3&a_2&a_1&a_0&0&\ldots&0\\
		\vdots&\vdots&\vdots&\vdots&\vdots&&\vdots\\
		a_{2i-1}&a_{2i-2}&a_{2i-3}&a_{2i-4}&a_{2i-5}&\ldots&a_i
		\end{bmatrix} \in \R^{i\times i},
	\end{align}
where the $(k,l)$ entry is   $a_{2k-l}$ when $0\leq 2k-l\leq s$ and 0 otherwise. We refer to $\det(H_i)$ as the $i$-th \emph{Hurwitz determinant}.  Note that $\det(H_s)=a_s \det(H_{s-1})$.

The following proposition  is a direct consequence of \cite[Corollary 3.2]{KW} and \cite[Proposition 1]{CFM}. 

\begin{proposition}\label{cor:hurwitz}
For a reaction network  with $s=\rk(N)\geq 2$ and $(h,\l) \in \R^n_{>0}\times\R_{> 0}^m$, let  
$H_i(h,\l)$ be the $i$-th Hurwitz matrix of the reduced characteristic polynomial $p_{(h,\l)}$ of $J(h,\l)$ as defined in \eqref{eq:charpoly} and let $a_s(h,\l)$ be its constant term. 
Assume that for all $(h,\l) \in \R^n_{>0}\times\R_{> 0}^m$, it holds that
\begin{align*}
\det(H_i(h,\l))>0 &  \quad \text{ for  all } \quad i=1,\ldots,s-2, \\ 
  \det(H_{s-1}(h,\l))\neq 0 & \quad  \text{ if  }\quad a_s(h,\l) >0.
 \end{align*}
 Then for all $(h,\ell)\in \R^n_{>0}\times \R^m_{>0}$, $p_{(h,\l)}$ does not have a pair of purely imaginary roots. In particular  the parametric ODE system \eqref{eq:ODE} does not admit a simple Hopf bifurcation in $\R^n_{>0}$.
\end{proposition}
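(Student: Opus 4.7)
The plan is to prove the contrapositive of the main claim: if at some $(h_0,\l_0)\in\R^n_{>0}\times\R^m_{>0}$ the reduced characteristic polynomial $p_{(h_0,\l_0)}$ admits a pair of purely imaginary roots $\pm\mathrm{i}\omega$ (with $\omega>0$), then at this point both $\det(H_{s-1})=0$ and $a_s>0$ hold simultaneously, contradicting the hypothesis. Once no such $(h_0,\l_0)$ exists, the statement about simple Hopf bifurcations follows immediately from \eqref{eq:convex}: the nontrivial eigenvalues of every Jacobian at a positive steady state are roots of some $p_{(h,\l)}$, so a Hopf pair would have to appear as a purely imaginary pair of $p_{(h,\l)}$.

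First I would recall Orlando's formula,
\[
\det(H_{s-1})\;=\;c_s\prod_{i<j}(\lambda_i+\lambda_j),
\]
where $c_s\neq 0$ depends only on $s$ and $\lambda_1,\dots,\lambda_s$ are the roots of $p_{(h,\l)}$. The pair $\mathrm{i}\omega,-\mathrm{i}\omega$ contributes a vanishing factor, giving $\det(H_{s-1}(h_0,\l_0))=0$ at once.

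Second I would establish $a_s(h_0,\l_0)>0$. The natural route is to factor $p_{(h_0,\l_0)}(z)=(z^2+\omega^2)\,q(z)$ with $q\in\R[z]$ monic of degree $s-2$, and to use the assumed positivity of $\det(H_i(h_0,\l_0))$ for $i=1,\ldots,s-2$ to deduce that $q$ is Hurwitz stable. This is exactly where \cite[Cor.~3.2]{KW} and \cite[Prop.~1]{CFM} enter: taken together, they imply that positivity of the first $s-2$ Hurwitz determinants of $p$, in the presence of one purely imaginary pair, forces the remaining $s-2$ roots into the open left half-plane. Granted this, $q(0)>0$ (real roots of $q$ are negative, complex-conjugate pairs contribute $|\mu|^2>0$), so $a_s(h_0,\l_0)=p_{(h_0,\l_0)}(0)=\omega^2\, q(0)>0$. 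Combined with $\det(H_{s-1}(h_0,\l_0))=0$, this contradicts the hypothesis.

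The main technical obstacle is the Hurwitz-stability step for $q$. In a self-contained treatment one would argue via Routh's sign-change count on the sequence $1,\det(H_1),\dots,\det(H_{s-2})$, combined with a perturbation argument to handle the boundary case $\det(H_{s-1})=0$. Here, however, the needed statement is packaged into the cited corollaries, which is why the proposition is advertised as a direct consequence of them and the proof reduces to the algebraic assembly above.
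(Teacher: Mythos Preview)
Your argument is correct and is essentially an explicit unpacking of what the paper leaves implicit: the paper gives no proof of its own here, only the remark that the proposition is a direct consequence of \cite[Corollary~3.2]{KW} and \cite[Proposition~1]{CFM}. Your use of Orlando's formula to get $\det(H_{s-1})=0$ and the factorization $p=(z^2+\omega^2)q$ together with the cited results to force $q$ Hurwitz (hence $a_s=\omega^2 q(0)>0$) is exactly the mechanism those references encode, so the two treatments coincide in substance.
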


We note that the Hurwitz determinants  $\det(H_i(h,\l))$ can be regarded as polynomials in $h,\ell$, and when doing so we simply write 
$\det(H_i)\in \R[h,\ell]$. Then, Proposition~\ref{cor:hurwitz} expresses the problem of precluding Hopf bifurcations as 
identifying whether a semi-algebraic set is nonempty.

\section{Removing reversible reactions with intermediates and Hopf bifurcations}\label{sec:reductionmotif}

As biologically relevant networks  often have a large number of species and parameters, the Hurwitz determinants 
are large polynomials with many terms. 
 This makes the problem of checking the semi-algebraic conditions in Proposition~\ref{cor:hurwitz} computationally challenging. In these cases it is advantageous to exploit the structure of the network. A particularly recurring motif in relevant networks, including the well-known 
 Michaelis-Menten mechanism, is of the form
 \begin{equation}\label{motif2}
 y \ce{<=>} Y \ce{->} y',
\end{equation}
where $y,y'$ are complexes in species other than $Y$. The dual phosphorylation network in 
 \eqref{eq:2sitefullnetwork} has four such motifs, with the species $Y$ being one of $KS_0, KS_1, FS_2$ and $FS_1$.
 
 In this section we provide evidence for the reverse reaction $Y\ce{->} y$ having no influence on whether the network admits Hopf bifurcations or not, and show that this holds when extra conditions are imposed on $y,y'$ and the rest of the reactions of the network. 
 
 Given a reaction network $G$ containing a motif of the form \eqref{motif2}, we let $G'$ denote the reaction network obtained from $G$ by removing the reaction $Y\ce{->} y$. We will  refer to this operation  as ``removing the backward reaction'' or making the ``reversible reaction irreversible''. Note that $G$ and $G'$ have the same number of species. 
We conjecture that the following holds. 
 
 \begin{conjecture}\label{conj:1}
Consider a reaction network $G$  with $n$ species and having motif \eqref{motif2} such that the species $Y$ does not appear in any other complex of the network, and let $G'$ be the network obtained by removing the backward reaction of the motif. 
Then, $G$ admits a Hopf bifurcation in $\R^n_{>0}$ if and only if $G'$ admits a Hopf bifurcation in $\R^n_{>0}$. 
 \end{conjecture}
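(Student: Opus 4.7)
The plan is to prove the two implications separately, as they have rather different flavours.

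For the direction \emph{Hopf bifurcation in $G'$ implies Hopf bifurcation in $G$}, I would use a perturbation argument. Adding the reaction $Y\to y$ does not change the stoichiometric subspace $\im(N)$, because the forward reaction $y\to Y$ is already in $G'$ and contributes the opposite stoichiometric vector; in particular, $G$ and $G'$ have the same conservation laws. A simple Hopf bifurcation in $G'$ is a smooth one-parameter family of rate constants and steady states at which a conjugate pair of eigenvalues of the Jacobian crosses the imaginary axis transversally. Extending this family to $G$ by setting the rate of $Y\to y$ to a small positive value $\epsilon$ yields a family of ODE systems that depends smoothly on $\epsilon$ and reduces to the $G'$ family in the limit $\epsilon=0$. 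Since transversality is an open condition, the implicit function theorem ensures that the Hopf bifurcation persists for all sufficiently small $\epsilon>0$, giving a simple Hopf bifurcation in $G$.

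For the direction \emph{Hopf bifurcation in $G$ implies Hopf bifurcation in $G'$}, which is the contrapositive of the absence-preserving statement in Theorem~\ref{thm:yes} without its extra hypotheses, I would work in the convex parameter framework. Because $Y$ appears only in the motif~\eqref{motif2}, the extreme rays of $\ker(N_G)\cap \R^{r}_{\geq 0}$ decompose cleanly into (i) the trivial cycle $E_{\mathrm{triv}}$ supported on $y\to Y$ and $Y\to y$, and (ii) the extreme rays of $\ker(N_{G'})\cap \R^{r-1}_{\geq 0}$ extended by a zero in the $Y\to y$ coordinate. Indeed, for any $v$ in the cone of $G$, subtracting $v_{Y\to y}\cdot E_{\mathrm{triv}}$ eliminates the backward flux while keeping all entries non-negative, because flux balance at the single species $Y$ forces $v_{y\to Y}\geq v_{Y\to y}$. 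Plugging this into equation~\eqref{eqn:Jac-convex} yields a decomposition
\[
J_G(h,\ell',\ell_m) \;=\; J_{G'}(h,\ell') \,+\, \ell_m\, C(h),
\]
where $C(h)$ is a rank-one matrix supported on $Y$ and the species of $y$, coming from the contribution of the trivial cycle through both reactions of the motif. The goal is then to show that $J_G$ cannot have a purely imaginary eigenvalue unless some $J_{G'}(h,\ell')$ already does; the matrix determinant lemma gives a clean handle, since the characteristic polynomial of $J_G$ differs from that of $J_{G'}$ by $\ell_m$ times a polynomial in $z$ built from the adjugate of $zI-J_{G'}$.

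The main obstacle is precisely this reverse direction. Setting $z=i\omega$ in the above relation yields an equation that determines $\ell_m$ in terms of $(h,\ell',\omega)$, and one would like to show that any positive solution $\ell_m>0$ forces $J_{G'}(h,\ell')$ to already admit a purely imaginary eigenvalue. At the level of steady-state fluxes the contribution of $\ell_m$ can be reabsorbed into the other $\ell_j$, which is what drives Theorem~\ref{thm:yes}; at the level of the Jacobian, however, the rank-one perturbation genuinely enlarges the family of matrices under consideration and is not equivalent to any reparametrization of the remaining $\ell_j$. Ruling out that this extra flexibility creates a new purely imaginary eigenvalue, in full generality and without additional hypotheses on $y$, $y'$, or the ambient reactions, is the crux, and is presumably the reason the statement is phrased as a conjecture rather than a theorem.
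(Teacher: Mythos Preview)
This statement is labelled a \emph{conjecture} in the paper and is not proved there; there is no paper proof to compare against. You recognise this yourself at the end of your proposal.

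For the direction $G'\Rightarrow G$, your perturbation sketch is plausible and is not something the paper addresses. To make it rigorous you would still need to check that (i) the positive steady state of $G'$ persists as a positive steady state of $G$ for small $\epsilon>0$---this follows from the implicit function theorem on a stoichiometric compatibility class, using that at a simple Hopf point the reduced Jacobian has no zero eigenvalue---and that (ii) transversality of the eigenvalue crossing is preserved under the perturbation, which is standard but should be stated.

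For the hard direction $G\Rightarrow G'$, your decomposition of the extreme cone and the identity $J_G(h,\ell',\ell_m)=J_{G'}(h,\ell')+\ell_m\,C(h)$ with $C$ rank one coincide exactly with what the paper establishes in Lemma~\ref{lem:EmatrixG} and Proposition~\ref{prop:jac}. Where you get stuck is indeed the open point. One correction, however: you write that ``the contribution of $\ell_m$ can be reabsorbed into the other $\ell_j$, which is what drives Theorem~\ref{thm:yes}''. That is not what the paper does. In Theorem~\ref{thm:yes} (under the extra hypotheses (A1)--(A5)) the parameters $\ell_1,\dots,\ell_{m-1}$ are kept \emph{fixed} at $\pi(\ell)$, and instead the three convex parameters $h_1,h_2,h_3$ attached to the motif species $X_1,X_2,X_3$ are redefined so that the reduced characteristic polynomials agree exactly, $p_{(h,\ell)}=p'_{(h',\ell')}$ (see Theorem~\ref{thm:charpol}). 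Your matrix-determinant-lemma analysis keeps $h$ fixed and therefore attempts something strictly harder; the manoeuvre you are missing, and which makes the special case tractable, is precisely this reparametrisation in~$h$.
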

 
One way to approach Conjecture~\ref{conj:1} is to first compare the capacity of the characteristic polynomials associated with the two networks $G,G'$ to admit purely imaginary roots for some parameter values and steady state. 
In Lemma~\ref{lem:EmatrixG} it will be shown that if the extreme matrix $E$ of $G$ has $m$ columns, then the extreme matrix $E'$ of $G'$ has $m-1$ columns. 
We let $p_{(h,\l)}$ be the reduced characteristic polynomial of the Jacobian matrix $J(h,\ell)$ of $G$ in convex parameters $(h,\ell)\in \R^n_{>0} \times \R^{m}_{>0}$, c.f. \eqref{eqn:Jac-convex}, \eqref{eq:charpoly}, and 
$p_{(h,\l')}'$ be the reduced characteristic polynomial of the Jacobian matrix of $G'$,
\begin{align}\label{eqn:Jac-convexreduced}
J'(h,\l'):=N' \diag(E'\l')(B')^{\top}\diag(h), \qquad (h,\l')\in \R^n_{>0} \times \R^{m-1}_{>0},
\end{align}
where  $N'$ and $B'$ are the stoichiometric and reactant matrices of $G'$,  respectively.

 \begin{conjecture}\label{conj:2}
Consider a reaction network $G$ having motif \eqref{motif2} such that the species $Y$ does not appear in any other complex of the network. 
With the notation above, $p_{(h,\l)}$ has purely imaginary roots for some $(h,\l)\in \R^n_{>0}\times \R^m_{>0}$ if and only if  $p_{(h,\l')}'$ has purely imaginary roots for some $(h,\l')\in \R^n_{>0}\times \R^{m-1}_{>0}$.
 \end{conjecture}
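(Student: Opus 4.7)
The plan is to translate the equivalence into a comparison of reduced characteristic polynomials, using a rank-one relation between the Jacobians of $G$ and $G'$ induced by the two-cycle $y\to Y\to y$. As a first step I would make precise the relationship between the extreme matrices. Because $Y$ appears only in the motif~\eqref{motif2}, the $Y$-th row of $N$ imposes $v_{y\to Y}=v_{Y\to y}+v_{Y\to y'}$ on every $v\in\ker(N)\cap\R^r_{\geq 0}$. The vector $v^\star$ supported on the reactions $y\to Y$ and $Y\to y$ with both entries equal to $1$ is therefore an extreme ray of $G$, and a standard decomposition argument shows that any other extreme ray satisfies $v_{Y\to y}=0$ and, after deletion of this coordinate, is an extreme ray of $G'$. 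Consequently we can split $\l=(\l_1,\l')\in\R_{>0}\times\R_{>0}^{m-1}$, where $\l_1$ is the coefficient of $v^\star$ and $\l'$ serves as convex parameters for $G'$.

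With this decomposition, a direct substitution into \eqref{eqn:Jac-convex} and \eqref{eqn:Jac-convexreduced} gives
\[ J(h,\l)=J'(h,\l')+\l_1\,(e_Y-\alpha)\,\beta(h), \]
where $\alpha\in\Z_{\geq 0}^n$ is the coefficient vector of the complex $y$ (so $\alpha_Y=0$) and $\beta(h)$ is an explicit row vector linear in $h$. Applying the matrix determinant lemma to this rank-one update and extracting the common factor $z^{n-s}$ from both characteristic polynomials yields
\[ p_{(h,\l)}(z)=p'_{(h,\l')}(z)-\l_1\,q(z;h,\l') \]
for some polynomial $q$ of degree at most $s-1$ in $z$ whose coefficients are polynomial in $(h,\l')$. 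The conjecture then becomes the following statement about the rational function $R(h,\l',\omega):=p'_{(h,\l')}(i\omega)/q(i\omega;h,\l')$ on $\R^n_{>0}\times\R^{m-1}_{>0}\times\R_{>0}$: its image meets $\R_{>0}$ (imaginary root of $p$ with $\l_1>0$) if and only if it contains $0$ (imaginary root of $p'$).

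For the backward direction I would use the implicit function theorem. At a generic $(h^\star,\l'^\star)$ with $p'_{(h^\star,\l'^\star)}(i\omega^\star)=0$ the eigenvalue $i\omega^\star$ is simple, and the condition ``$p_{(h,\l)}$ has a purely imaginary root'' cuts out a smooth real-analytic hypersurface $\Sigma$ in $(h,\l',\l_1)$-space passing through $(h^\star,\l'^\star,0)$. Provided $\Sigma$ is not tangent to $\{\l_1=0\}$—a generic transversality condition that can be arranged by a small prior perturbation of $(h^\star,\l'^\star)$—the hypersurface extends into $\l_1>0$ and supplies a point at which $p$ has a purely imaginary root.

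The forward implication is the main obstacle. Suppose $R$ takes a value in $\R_{>0}$ at some $(h^\star,\l'^\star,\omega^\star)$; one must produce another parameter point at which $R$ vanishes. There is no obvious topological or scaling reason the image of $R$ should contain $0$ as soon as it meets the positive reals, and the naive strategy of continuously decreasing $\l_1$ to zero while preserving $\det(H_{s-1})=0$ requires a path avoiding the pole locus of $R$ and staying inside $\R^n_{>0}\times\R^{m-1}_{>0}\times\R_{>0}$. My plan would be to exploit the linear dependence $p=p'-\l_1 q$ to express $\det(H_{s-1})(h,\l_1,\l')$ as a polynomial in $\l_1$ whose coefficients are Hurwitz-type expressions built from $p'$ and $q$, and then to analyze when the vanishing of this polynomial can be continuously deformed to $\l_1=0$. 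This is the step where I expect further structural hypotheses on $y$, $y'$, and the rest of the network—akin to those invoked in Theorem~\ref{thm:yes}—to be necessary, and it is the part of the argument I anticipate to be the most delicate.
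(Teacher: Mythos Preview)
The statement you are attempting to prove is labeled a \emph{Conjecture} in the paper; the paper does not claim to prove it. The paper's only contribution towards it is Theorem~\ref{thm:yes}, which establishes Conjecture~\ref{conj:3} under the extra hypotheses (A1)--(A5) and thereby yields the \emph{forward} implication (``$p$ has purely imaginary roots $\Rightarrow$ $p'$ does'') under those hypotheses. The backward implication is not addressed beyond examples (cf.\ Example~\ref{ex::calcium}). So you are attempting something strictly stronger than what the paper achieves, and your own proposal acknowledges this by ending with ``further structural hypotheses\ldots\ to be necessary''.

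Your setup is sound and essentially matches the paper: the identification of $v^\star$ as an extreme ray and the description of the remaining rays is Lemma~\ref{lem:EmatrixG}, and your rank-one formula $J(h,\l)=J'(h,\l')+\l_1(e_Y-\alpha)\beta(h)$ is exactly Proposition~\ref{prop:jac} (there written as $J(h,\l)=J'(h,\pi(\l))+J_2(\l_m)\diag(h)$). The matrix determinant lemma step is correct once one notes $\rk(N)=\rk(N')$, so $z^{n-s}$ divides the correction term.

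Where your argument has a genuine gap is the backward direction. You assert that the locus $\Sigma=\{(h,\l',\l_1): p_{(h,\l)}\text{ has a purely imaginary root}\}$ is a smooth hypersurface near $(h^\star,\l'^\star,0)$ and is generically transverse to $\{\l_1=0\}$, but neither claim is justified. Nothing excludes $\Sigma$ from being locally contained in $\{\l_1=0\}$, and ``perturb $(h^\star,\l'^\star)$ to a generic point'' presupposes that the set of $(h,\l')$ for which $p'$ has a purely imaginary root is open or at least full-dimensional, which is not given. The paper in fact observes (Example~\ref{ex::calcium}) that the set of reduced characteristic polynomials for $G'$ can be strictly larger than for $G$, so some polynomials with imaginary roots for $G'$ might a priori have no counterpart for $G$; the backward direction is genuinely open.

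For the forward direction, your envisioned deformation argument (following $\det(H_{s-1})=0$ as $\l_1\searrow 0$) is a different route from the paper's. Under (A1)--(A5), the paper instead constructs an explicit algebraic map $\phi\colon\R^n_{>0}\times\R^m_{>0}\to\R^n_{>0}\times\R^{m-1}_{>0}$ with $p'_{\phi(h,\l)}=p_{(h,\l)}$ (Theorem~\ref{thm:charpol}); only $h_1,h_2,h_3$ are modified, and the formulas come from solving three scalar equations extracted from the block structure of $J(h,\l)$ in Proposition~\ref{Jacblock}. This avoids any path-tracking or transversality issues, but it is precisely where (A1)--(A5) are used. Your approach would, if it could be made to work, potentially cover cases beyond (A1)--(A5), but as you yourself note, the obstruction to continuing the path to $\l_1=0$ is real and unresolved.
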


 Conjecture~\ref{conj:2} does not directly imply Conjecture~\ref{conj:1}, as a Hopf bifurcation requires in addition that there is a pair of eigenvalues that crosses the imaginary axis as a parameter varies. However, if Conjecture~\ref{conj:2} is true, then a way to preclude Hopf bifurcations for $G$ is to preclude the existence of a pair of purely imaginary roots of $p_{(h,\l')}'$   for all $(h,\l')\in \R^n_{>0}\times \R^{m-1}_{>0}$. Since the coefficients of   $p_{(h,\l')}'$ depend on one less variable than those of $p_{(h,\l)}$, the computational cost is reduced. Additionally, for networks like  \eqref{eq:2sitefullnetwork}, with four such motifs, the reduction would be by four variables.
 
 One implication of Conjecture~\ref{conj:2} follows from our next conjecture, stating that the set of reduced characteristic polynomials for $G$ is contained in that of $G'$.

  \begin{conjecture}\label{conj:3}
Consider a reaction network $G$ having motif \eqref{motif2} such that the species $Y$ does not appear in any other complex of the network. 
With the notation above, 
\begin{equation}\label{eq:polys}
\big\{p_{(h,\l)} :  (h,\l)\in \R^n_{>0}\times \R^m_{>0}\big\}\subseteq \big\{p'_{(h,\l')} :  (h,\l')\in \R^n_{>0}\times \R^{m-1}_{>0}\big\}. 
\end{equation}
 \end{conjecture}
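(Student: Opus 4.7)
The plan is to reduce the comparison of reduced characteristic polynomials to a rank-one Jacobian update and then absorb that update into a reparametrization of $G'$. The first step, using Lemma~\ref{lem:EmatrixG}, is to choose the extreme matrix of $G$ in the form $E=[E'\mid e_{\mathrm{cyc}}]$, where $e_{\mathrm{cyc}}$ is the extreme vector supported on the cycle $y\to Y\to y$ of the motif (value $1$ on each of those two reactions, zero elsewhere); that $e_{\mathrm{cyc}}$ is the unique extra extreme ray follows because any non-negative kernel element with positive value on $Y\to y$ dominates a positive multiple of $e_{\mathrm{cyc}}$, by flux balance at $Y$. Writing $\ell=(\ell',\ell_m)\in\R^{m-1}_{>0}\times\R_{>0}$ one then has $E\ell=E'\ell'+\ell_m e_{\mathrm{cyc}}$: the flux through $Y\to y$ in $G$ equals $\ell_m$, the flux through $y\to Y$ in $G$ exceeds the corresponding $G'$-flux by $\ell_m$, and all other fluxes coincide with those of $G'$ under the parameters $(h,\ell')$.

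The second step is a direct computation that relies on the assumption that $Y$ does not appear in any complex other than itself. Writing $u=e_Y-\alpha^{y}$, the contributions of the three motif reactions to $J(h,\ell)$ and $J'(h,\ell')$ cancel to produce the rank-one relation
\begin{equation*}
J(h,\ell) \;=\; J'(h,\ell') \;-\; \ell_m\, u\, u^{\top}\diag(h).
\end{equation*}
Applying the matrix determinant lemma to $zI-J(h,\ell)=(zI-J'(h,\ell'))+\ell_m\,u\,(\diag(h)u)^{\top}$ gives
\begin{equation*}
\det\!\bigl(zI-J(h,\ell)\bigr) \;=\; \det\!\bigl(zI-J'(h,\ell')\bigr) \;+\; \ell_m\, u^{\top}\diag(h)\,\operatorname{adj}\!\bigl(zI-J'(h,\ell')\bigr)\,u,
\end{equation*}
so after dividing by $z^{n-s}$ one obtains an explicit formula for $p_{(h,\ell)}(z)-p'_{(h,\ell')}(z)$ as $\ell_m$ times a polynomial in $z$ whose coefficients are polynomial in $(h,\ell')$.

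The third step is to exhibit $(h'',\ell'')\in\R^n_{>0}\times\R^{m-1}_{>0}$ for which $p'_{(h'',\ell'')}=p_{(h,\ell)}$. The natural ansatz is to keep the non-motif fluxes fixed and to leave $h_j$ unchanged except possibly for $j\in\{Y\}\cup\operatorname{supp}(\alpha^{y})\cup\operatorname{supp}(\alpha^{y'})$, using the free parameters $h_Y$, a few remaining $h_j$, and the single surviving motif flux $\phi''_1=\phi''_3$ (determined by $\ell''$) to match all $s$ coefficients of the reduced characteristic polynomial; the low-dimensional example of Michaelis--Menten shows that this matching is possible. The hard part is that the correction in step two couples into every coefficient of $p_{(h,\ell)}$ through the full adjugate $\operatorname{adj}(zI-J'(h,\ell'))$, while the allowed perturbations modify $J'(h'',\ell'')$ only through a handful of rank-one pieces with rigid dependence on $\alpha^{y}$, $\alpha^{y'}$ and $e_Y$. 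Showing that the resulting polynomial matching system is always solvable in the positive orthant therefore seems to require structural information about how $\alpha^{y}$ and $\alpha^{y'}$ enter the remainder of the network, and this is exactly the obstacle that Theorem~\ref{thm:yes} sidesteps by imposing extra hypotheses on $y$, $y'$ and the other reactions.
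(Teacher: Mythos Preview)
The statement you are attempting is Conjecture~\ref{conj:3}, which the paper explicitly leaves open; only the restricted version in Theorem~\ref{thm:yes}, under the extra hypotheses (A1)--(A5), is actually proved. So there is no proof in the paper to compare against in full generality, and your proposal should be read as an attack on an open conjecture.

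Your Steps~1 and~2 are correct. Step~1 is precisely Lemma~\ref{lem:EmatrixG}, and the rank-one identity in Step~2 is a compact reformulation of Proposition~\ref{prop:jac}: with $u=e_Y-e_y$ one has $\rho_1=u$ and $J_2(\ell_m)=\ell_m\,\rho_1(e_y-e_Y)^\top=-\ell_m\,uu^\top$, hence $J(h,\ell)=J'(h,\pi(\ell))-\ell_m\,uu^\top\diag(h)$. The matrix-determinant-lemma expression for $p_{(h,\ell)}-p'_{(h,\pi(\ell))}$ is a clean observation the paper does not make explicit.

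Step~3 is where the proposal ceases to be a proof, and you say so yourself. The gap is genuine and not a matter of bookkeeping: your ansatz frees only a bounded number of parameters (the $h_j$ with $j$ in the support of $y$, $y'$, $Y$, plus one motif flux), while the adjugate correction $\ell_m\,u^\top\diag(h)\,\operatorname{adj}(zI-J'(h,\ell'))\,u$ perturbs all $s$ coefficients of the reduced characteristic polynomial with no a priori low-dimensional structure. Pointing to the Michaelis--Menten case, where $s$ is tiny, does not address this; for a general network there is no reason to expect $s$ polynomial equations in a fixed number of unknowns to be solvable in the positive orthant.

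What makes the special case of Theorem~\ref{thm:yes} go through is that (A1)--(A5) force exactly such a collapse. Propositions~\ref{Jacblock} and~\ref{prop:charpolycoeffG} show that, under those assumptions, $\ell_m$ enters every coefficient $d_k$ of $p_{(h,\ell)}$ only through $d_1$ and through two of the products $(\ell_m+L_{r-1})h_ih_j$ with $i,j\in\{1,2,3\}$; the matching problem then reduces to the three equations~\eqref{eq:tosee}, which Theorem~\ref{thm:charpol} solves explicitly for $h_1',h_2',h_3'$ via the quadratic formula. Your rank-one set-up is a natural starting point, but completing it in full generality would require exhibiting an analogous low-dimensional factorization of the $\ell_m$-dependence for an arbitrary network, and that is precisely the open content of the conjecture.
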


 In the remainder of this section, we first state  a scenario  where  Conjecture~\ref{conj:3} holds (Theorem~\ref{thm:yes}), and afterwards we discuss other networks, falling outside this scenario, that provide extra evidence of its generality. We will note in Example~\ref{ex::calcium} below that we cannot expect 
an equality of sets in \eqref{eq:polys}, as for the example the inclusion is strict. 
Observe that this does not necessarily imply that Conjecture~\ref{conj:2} is not true.

  \begin{theorem}\label{thm:yes} 
Let $G$ be a network that satisfies the following assumptions:
\begin{enumerate}[label=(\roman*)]
\item[(A1)] Contains  motif~\eqref{motif2} in the following form
\[ X_1+X_2\ce{<=>}  X_3 \ce{->} c +\delta X_2 \] 
where $X_i$ for $i=1,2,3$ are distinct, $\delta\in \{0,1\}$ and $c$ is a complex. 
\item[(A2)] $X_3$ only appears in the given motif.
\item[(A3)]  $c$ does not depend on $X_1,X_2,$ and $X_3$. 
\item[(A4)] $X_1$ is not a reactant of reactions outside the motif. 
\item[(A5)] If there are other reactions involving  $X_2$, then:
\begin{itemize}
\item[(a)] When $\delta=1$, if a reaction outside the motif has $X_2$  in the product, then $X_2$ is a catalyst and $X_1$ is not in the product. 
\item[(b)]  When $\delta=0$, the reactions outside the motif only have $X_2$ in the product.
\end{itemize} 
\end{enumerate}
Then, \[ \big\{p_{(h,\l)} :  (h,\l)\in \R^n_{>0}\times \R^m_{>0}\big\}\subseteq \big\{p'_{(h,\l')} :  (h,\l')\in \R^n_{>0}\times \R^{m-1}_{>0}\big\}. \]
 \end{theorem}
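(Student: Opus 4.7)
The plan is to prove the inclusion constructively: given any $(h,\ell)\in \R^n_{>0}\times \R^m_{>0}$, I will exhibit $(h^\ast,\ell^\ast)\in \R^n_{>0}\times \R^{m-1}_{>0}$ with $p_{(h,\ell)}=p'_{(h^\ast,\ell^\ast)}$.

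First, I would analyse the relation between the extreme matrices of $G$ and $G'$. Because $X_3$ appears only in the motif by (A2), balance at $X_3$ forces $v_f=v_b+v_c$ for every $v\in\ker(N)\cap\R^r_{\geq 0}$; hence $E$ has exactly one more extreme ray than $E'$, namely the futile-cycle ray $E_0$ supported only on the forward and backward motif reactions, and the remaining $m-1$ extreme rays of $E$ biject with those of $E'$ via $v_b=0$, $v_f=v_c$. Writing $\ell=(\ell_0,\ell')\in\R_{>0}\times\R^{m-1}_{>0}$ accordingly and expanding the motif contributions to $J(h,\ell)=\sum_k (E\ell)_k\,n_k b_k^\top D_h$ using $b_f=e_{X_1}+e_{X_2}$, $b_b=b_c=e_{X_3}$, and the identity $b_f-e_{X_3}=-n_f$ where $n_f=e_{X_3}-e_{X_1}-e_{X_2}$, I would obtain the rank-one identity
\[ J(h,\ell) = J'(h,\ell') - \ell_0\, n_f n_f^\top D_h, \qquad D_h:=\operatorname{diag}(h). \]

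Applying the matrix determinant lemma to this rank-one perturbation gives
\[ \det\!\bigl(zI_n-J(h,\ell)\bigr) = \det\!\bigl(zI_n-J'(h,\ell')\bigr) + \ell_0\,\Psi(z;h,\ell'), \]
where $\Psi(z;h,\ell'):= n_f^\top D_h\,\operatorname{adj}\!\bigl(zI_n-J'(h,\ell')\bigr)\,n_f$ is a polynomial in $z$ with coefficients in $\R[h,\ell']$. The theorem thus reduces to finding $(h^\ast,\ell^\ast)\in\R^n_{>0}\times\R^{m-1}_{>0}$ for which $\det(zI_n-J'(h^\ast,\ell^\ast))$ equals this perturbed polynomial, i.e., absorbing the $\ell_0\Psi$ term into a reparametrisation of $J'$.

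Carrying out this reparametrisation is the crux of the argument, and is where assumptions (A2), (A4), (A5) enter decisively. They force each partial derivative $\partial J'/\partial h_{X_i}$ for $i=1,2,3$ to be rank-one, with row $e_{X_i}^\top$ and column direction in $\operatorname{span}(n_f)$ (for $i=1$, and for $i=2$ up to catalytic corrections in the $\delta=1$ branch whose stoichiometric vectors vanish at $X_1$ and $X_3$) or in $\operatorname{span}(n_c)$ (for $i=3$). My plan is to keep most coordinates of $(h^\ast,\ell^\ast)$ equal to $(h,\ell')$ and adjust a small controlled subset (plausibly $h^\ast_{X_3}$ together with one or more $\ell^\ast$-components corresponding to productive extreme rays), converting the matching requirement into a polynomial system in the unknown adjustments. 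The main obstacle is then to verify that this polynomial system admits a solution in the positive orthant; I expect this to require a careful positivity analysis exploiting the signs of the entries of $n_f$, $n_c$, and $h$, together with the sparsity guaranteed by (A4)-(A5), and a case split on $\delta\in\{0,1\}$, with the $\delta=1$ case more intricate because of the catalytic contributions permitted by (A5)(a).
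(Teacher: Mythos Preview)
Your setup is correct and matches the paper's Step~1 exactly: the extreme matrix of $G$ is $E'$ augmented by the futile-cycle ray, and the rank-one identity $J(h,\ell)=J'(h,\pi(\ell))-\ell_m\,n_f n_f^\top D_h$ (equivalently the paper's $J_2(\ell_m)=\ell_m\rho_1(e_y-e_Y)^\top=-\ell_m\rho_1\rho_1^\top$) is right. The matrix-determinant-lemma formulation is a clean way to phrase the perturbation of the characteristic polynomial.

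The gap is in the reparametrisation step, which is the heart of the proof and which you leave as an outline. Two issues. First, your proposed adjustment set---``$h^\ast_{X_3}$ together with one or more $\ell^\ast$-components''---is the wrong choice. Any extreme ray of $G'$ that passes through the motif must, by balance at $X_3$, carry flux through other reactions of the network as well; changing such an $\ell^\ast$-component therefore perturbs the entries $L_1,\dots,L_{r-3}$ and hence the block $K=N'''\diag(L_{1:r-3})(B''')^\top$, destroying the part $\beta_{(h_{4:n},\ell)}$ of the characteristic polynomial that you need to keep fixed. The paper instead keeps $\ell^\ast=\pi(\ell)$ and $h^\ast_4,\dots,h^\ast_n$ unchanged and adjusts \emph{all three} of $h_1,h_2,h_3$. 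Second, the reason this three-parameter adjustment suffices is a structural computation you have not carried out: under (A1)--(A5) the block form of $J(h,\ell)$ forces every coefficient $d_k$ of the ``motif part'' $\alpha_{(h,\ell)}$ to depend on $\ell_m$ only through the three scalars $(\ell_m+L_{r-1})h_1$, $(\ell_m+L_{r-1})h_2$, $(\ell_m+L_{r-2})h_3$ and two of the three pairwise products $(\ell_m+L_{r-1})h_ih_j$ (which pair depends on $\delta$). Matching these against their $\ell_m=0$ counterparts gives a $3\times 3$ system in $h_1',h_2',h_3'$ that reduces to a single quadratic; the paper writes down the explicit radical solution and its positivity is then immediate from the form of the discriminant. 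None of this is visible from the matrix-determinant-lemma expression $\Psi(z;h,\ell')$ alone---you need the block decomposition (the paper's Propositions~\ref{Jacblock} and~\ref{prop:charpolycoeffG}) to see why three $h$-parameters are exactly enough and why the resulting system is solvable in $\R_{>0}$.
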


The proof of Theorem~\ref{thm:yes} is rather technical and is given in Section~\ref{sec:proof}. Importantly, the proof is constructive, that is, given $(h,\l)\in \R^n_{>0}\times \R^m_{>0}$, values $ (h,\l')\in \R^n_{>0}\times \R^{m-1}_{>0}$ are found such that 
  $p_{(h,\l)} = p'_{(h,\l')}$ (see Example~\ref{ex:constructive}). 
  The following corollary is an immediate consequence of Theorem~\ref{thm:yes}.

\begin{corollary}\label{cor:Hopf}
For a network $G$ satisfying conditions (A1)-(A5) of Theorem~\ref{thm:yes}, and with the notation above, if $p'_{(h,\l')}$ does not have purely imaginary roots for any $(h,\l')\in \R^n_{>0}\times \R^{m-1}_{>0}$, then neither $G$ nor $G'$ admits a Hopf bifurcation.
\end{corollary}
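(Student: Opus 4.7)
The plan is to derive the corollary directly from Theorem~\ref{thm:yes} combined with the convex-parameter identity \eqref{eq:convex}, without introducing any new machinery. First I would observe that $G$ and $G'$ share the same species set, and that removing the backward reaction $Y\to y$ does not change the rank of the stoichiometric matrix: the forward reaction $y\to Y$ is still present and contributes the opposite column, so $s := \rk(N) = \rk(N')$. In particular $p_{(h,\l)}$ and $p'_{(h,\l')}$ are monic polynomials in $z$ of the same degree $s$, and their sets of roots can be compared meaningfully.

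Next I would dispose of $G'$ directly. By \eqref{eq:convex} applied to $G'$, every Jacobian $J_{f_{\k'}}(x)$ of the mass-action ODE system of $G'$ at a positive steady state equals $J'(h,\l')$ for some $(h,\l')\in \R^n_{>0}\times \R^{m-1}_{>0}$, and the characteristic polynomial of such a Jacobian factors as $z^{n-s}\,p'_{(h,\l')}$. By the hypothesis, $p'_{(h,\l')}$ has no purely imaginary roots for any $(h,\l')$, so no Jacobian of $G'$ at a positive steady state admits a pair of nonzero purely imaginary eigenvalues. Since a simple Hopf bifurcation requires precisely such an eigenvalue pair, $G'$ cannot admit one in $\R^n_{>0}$.

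For $G$ the same conclusion follows after a single application of Theorem~\ref{thm:yes}. Applying \eqref{eq:convex} to $G$, any Jacobian of $G$ at a positive steady state has characteristic polynomial $z^{n-s}\,p_{(h,\l)}$ for some $(h,\l)\in \R^n_{>0}\times \R^m_{>0}$. Theorem~\ref{thm:yes} produces $(h,\l')\in \R^n_{>0}\times \R^{m-1}_{>0}$ with $p_{(h,\l)}=p'_{(h,\l')}$, and the hypothesis then forces this polynomial to have no purely imaginary roots; hence no nonzero purely imaginary eigenvalue pair arises, and $G$ admits no simple Hopf bifurcation in $\R^n_{>0}$ either. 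The only real input is Theorem~\ref{thm:yes}; the mild caveat I would check in passing is that the hypothesis of \eqref{eq:convex} (the extreme matrices of $G$ and $G'$ have no zero row) is valid under (A1)--(A5), which should be immediate from the motif structure and from the preparatory Lemma~\ref{lem:EmatrixG} that relates $E$ and $E'$.
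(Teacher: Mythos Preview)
Your argument is correct and matches the paper's own reasoning, which simply declares the corollary an immediate consequence of Theorem~\ref{thm:yes} together with the convex-parameter description \eqref{eq:convex} of the Jacobians at positive steady states. One small notational slip: Theorem~\ref{thm:yes} only asserts the set inclusion, so the parameters realizing $p_{(h,\ell)}$ on the $G'$ side should be written as some $(\tilde h,\ell')$ rather than $(h,\ell')$ with the same $h$ (indeed, the explicit map $\phi$ built in Section~\ref{sec:proof} changes $h_1,h_2,h_3$).
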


In view of Corollary~\ref{cor:Hopf}, in order to preclude Hopf bifurcations for $G$ and $G'$, it is enough to establish  that the Hurwitz determinants of the reduced characteristic polynomial of $J'(h,\ell')$ satisfy Proposition~\ref{cor:hurwitz}. 

   Theorem~\ref{thm:yes} will be relevant in the next section, when we preclude Hopf bifurcations for \eqref{eq:3-intmdnetwork2}. 
In the rest of this section, we discuss additional networks to show the use of  Theorem~\ref{thm:yes} and its limitations.

\begin{example}\label{ex:twoint} In \cite{CFM}, the authors considered four networks obtained by removing two intermediates from the dual phosphorylation network \eqref{eq:2sitefullnetwork} and all reverse reactions. As an application of Theorem~\ref{thm:yes} we consider the network $\mathcal{N}_1$ that exhibited interesting structure:
\begin{align*}
\begin{split}
S_0 + K \ce{->} KS_0 \ce{->} S_1+K    \ce{->} S_2+K \\
S_2 + F  \ce{->} FS_2 \ce{->} S_1+F   \ce{->} S_0+F.
\end{split}
\end{align*}
	The reduced characteristic polynomial of the Jacobian matrix of network $\mathcal{N}_1$ does not have purely imaginary roots and hence, the network does not admit Hopf bifurcations \cite[Theorem 2]{CFM}. Treating $S_0,K, KS_0$ as $X_1,X_2,X_3$ respectively in  Theorem~\ref{thm:yes}, the assumptions of the theorem hold with $\delta=1$. Hence, by Corollary~\ref{cor:Hopf}, we conclude that the following network does not admit a  Hopf bifurcation:
	\begin{align*}
	\begin{split}
	S_0 + K \ce{<=>} KS_0 \ce{->} S_1+K    \ce{->} S_2+K \\ 
	S_2 + F  \ce{->} FS_2 \ce{->} S_1+F   \ce{->} S_0+F.
	\end{split}
	\end{align*}
We can now reiterate and consider $S_2,F,$ and $FS_2$ as $X_1,X_2,X_3$ and $\delta=1$ in the previous network, to conclude that the following network does not either admit a Hopf bifurcation:
	\begin{align*}
	\begin{split}
	S_0 + K \ce{<=>} KS_0 \ce{->} S_1+K    \ce{->} S_2+K \\ 
	S_2 + F  \ce{<=>} FS_2 \ce{->} S_1+F   \ce{->} S_0+F.
	\end{split}
	\end{align*}
 \end{example}

\begin{example}\label{ex::calcium}
We consider the simple calcium transport network studied in \cite{ges05}:
\begin{align*}
0 & \ce{<=>[\k_1][\k_2]}  X_1 \quad&
X_1+X_4 &  \ce{->[\k_3]} 2X_1\quad &
X_1+X_2 &  \ce{<=>[\k_4][\k_5]} X_3 \ce{->[\k_6]} X_4+X_2.
\end{align*}

Here, $X_1$ represents cytosolic calcium, $X_4$  calcium in the endoplasmic reticulum, and $X_2$ is an enzyme catalyzing the transfer via the formation of an intermediate protein complex $X_3$. This network is known to admit a Hopf bifurcation, see \cite{ges05}. 
The motif with reactions labeled by $\k_4,\k_5,\k_6$  is of the form \eqref{motif2} but does not satisfy assumption (A4) in Theorem~\ref{thm:yes}, as $X_1$ is a reactant in other reactions. 
However, removal of the reaction with label $\k_5$ yields a network that also admits a Hopf bifurcation for 
\[(\k_1, \k_2,\k_3,\k_4,\k_6)= (1,1,\tfrac{1}{2},1,1), \qquad (x_1,x_2,x_3,x_4)= (1,\tfrac{45}{7},\tfrac{45}{7},\tfrac{90}{7}),\] 
with $\k_4$ as the Hopf parameter. This gives additional support for the validity of Conjectures~\ref{conj:1} and \ref{conj:2}.
 The containment in Conjecture~\ref{conj:3} also holds, but is strict. To see this, we write the reduced characteristic polynomial of the full network as
\[p_{(h,\l)}:=z^3+a_1(h,\ell)z^2+a_2(h,\ell)z+a_3(h,\ell).\]
With the extreme matrix
{\small	\[E= \begin{bmatrix}
	1 & 1 & 0 & 0 & 0 & 0 \\
	0 & 0 & 1 & 1 & 0 & 1 \\
	0 & 0 & 0 & 1 & 1 & 0 
	\end{bmatrix}^\top,\]}
the coefficients of $p_{(h,\ell)}$ are:
{\small
\begin{align*}
	a_1(h,\ell)&:=h_1 \l_1 + h_1 \l_3 + h_2( \l_2 + \l_3) + h_3( \l_2 + \l_3) + h_4 \l_2,  \\
	a_2(h,\ell)&:=(h_1 \l_1+h_4 \l_2 - h_1  \l_2 )h_2 ( \l_2+ \l_3)     + (h_1 \l_1  + h_4 \l_2) h_3  (\l_2 + \l_3) + h_1 h_4 \l_1 \l_2 + h_1 h_4\l_2( \l_2+ \l_3), \\
	a_3(h,\ell)&:=h_1 h_2 h_4 \l_1 \l_2(\l_2 +\l_3) + h_1 h_3 h_4 \l_1 \l_2 (\l_2+ \l_3).
\end{align*}
}
The following definitions
\[ 
	t_1:=h_1\l_1,\quad t_2:=h_2(\l_2+\l_3),\quad t_3:=h_3(\l_2+\l_3),\quad t_4:=h_4\l_2, \quad t_5:=h_1\l_3, \quad t_6:=h_1\l_2,
\] 
 define a surjective map from $\R^7_{>0}$ to $\R^6_{>0}$, since given $t_1,\dots,t_6\in \R^6_{>0}$, and any $h_1>0$, we use $t_1,t_5,t_6$ to find $\ell_1,\ell_2,\ell_3$, and then $t_2,t_3,t_4$ to find $h_2,h_3,h_4$. 
 By defining the map $\varphi\colon \R^6_{>0} \rightarrow \R^3$ by
 \begin{align*}
\varphi_1(t)&:=t_1+t_2+t_3+t_4+t_5, \notag\\
\varphi_2(t)&:=(t_1+t_4-t_6) t_2 + (t_1+t_4) t_3 + t_1 t_4  + t_4 t_5  + t_4 t_6,     \\
\varphi_3(t)&:=t_1t_2t_4+t_1t_3t_4,\notag
\end{align*}
this gives that
\begin{align*}
\mathcal{P}:=&\{p_{(h,\l)} :  (h,\l)\in \R^4_{>0}\times \R^3_{>0}\}
=\big\{z^3+\varphi_1(t)z^2+\varphi_2(t)z+\varphi_3(t) : t\in \R^6_{>0} \big\}.
\end{align*}
Using \texttt{Mathematica} with the command \texttt{Reduce} \cite{reference.wolfram_2024_reduce}, 
we find the semi-algebraic conditions describing the image of $\varphi$ and obtain:
\begin{align*}\mathcal{P}:=&\{p_{(h,\l)} :  (h,\l)\in \R^4_{>0}\times \R^3_{>0}\}
=\big\{z^3+b_1z^2+b_2z+b_3 : b_1>0, b_3>0, b_3<\tfrac{b_1^3}{27} \big\}.
\end{align*}

Repeating the process for the reduced network obtained by removing the reaction with label $\k_5$, we obtain 
\begin{align*}
\mathcal{P}' := &\{p_{(h,\l')} :  (h,\l')\in \R^4_{>0}\times \R^{2}_{>0}\}\\
=& \mathcal{P} \cup \{
z^3+b_1z^2+b_2z+b_3 : b_1>0,  b_3=\tfrac{b_1^3}{27} \text{ and }b_2 >\tfrac{b_1^2}{3}
\}.
\end{align*}
Hence $\mathcal{P}'\setminus \mathcal{P}\neq \emptyset,$ 
showing that the inclusion \eqref{eq:polys} can be strict. However, a straightforward computation shows that the polynomials in $\mathcal{P}'\setminus \mathcal{P}$ do not satisfy the assumptions of Proposition~\ref{cor:hurwitz} and hence, do not have purely imaginary roots. We conjecture that this will always be the case, and that with this in place, Conjecture~\ref{conj:2} would follow from Conjecture~\ref{conj:3}. 
\end{example}

\begin{example}\label{network:processivedistributive}
Consider the   dual phosphorylation network where phosphorylation occurs processively, that is, one encounter of the kinase with the substrate leads to the phosphorylation of both sites, and dephosphorylation is distributive as in network \eqref{eq:2sitefullnetwork}:
	\begin{small}
		\begin{equation}\label{eq:network5}
		\begin{aligned}
		E+ S_0  \ce{<=>[\k_{1}][\k_{2}]} &ES_0 \ce{->[\k_{3}]}   ES_1\ce{->[\k_7]} E+S_2  \\
		F+ S_{2}  \ce{<=>[\k_{8}][\k_{9}]}  &FS_{2} \ce{->[\k_{10}]} F+ S_{1}  \ce{<=>[\k_{4}][\k_{5}]} FS_1 \ce{->[\k_{6}]}F+S_0.
		\end{aligned}
		\end{equation}
	\end{small}%
 It was shown in \cite{SK} that the network admits Hopf bifurcations, and in particular, there exists $(h,\l)$ such that $p_{(h,\l)}$ has purely imaginary roots. A more detailed analysis can be found in \cite{CMS}. 

This network, with the motif $E+ S_0  \ce{<=>} ES_0 \ce{->}   ES_1$ and $X_1=E$, $X_2=S_0$, and $X_3=ES_0$, satisfies the assumptions of Theorem~\ref{thm:yes} with $\delta=0$. Hence, the reduced characteristic polynomial of the reduced network
\begin{small}
	\begin{equation}\label{eq:network6}
	\begin{aligned}
	E+ S_0  \ce{->[\k_{1}]} &ES_0 \ce{->[\k_{3}]}   ES_1\ce{->[\k_7]} E+S_2  \\
	F+ S_{2}  \ce{<=>[\k_{8}][\k_{9}]}  &FS_{2} \ce{->[\k_{10}]} F+ S_{1}  \ce{<=>[\k_{4}][\k_{5}]} FS_1 \ce{->[\k_{6}]}F+S_0
	\end{aligned}
	\end{equation}
\end{small}%
also has a pair of purely imaginary roots for some  parameters and steady state. 
There are two additional instances of motifs   \eqref{motif2}. Removal of the 
reverse reactions,   with label $\kappa_9$ and $\kappa_5$, yields the network 
\begin{small}
	\begin{equation*}
	\begin{aligned}
	E+ S_0  \ce{->[\k_{1}]} &ES_0 \ce{->[\k_{3}]}   ES_1\ce{->[\k_7]} E+S_2  \\
	F+ S_{2}  \ce{->[\k_{8}]}  &FS_{2} \ce{->[\k_{10}]} F+ S_{1}  \ce{->[\k_{4}]} FS_1 \ce{->[\k_{6}]}F+S_0.
	\end{aligned}
	\end{equation*}
\end{small}%
For this network, the reduced characteristic polynomial also admits purely imaginary roots for suitably chosen reaction rate constants, namely
{\small	\begin{align*}
(\k_1,\k_3,\k_4,\k_6,\k_7,\k_8,\k_{10}) & = (1,1,3,40,1,50,0.9),
\end{align*}}%
and the corresponding positive steady state.
However, neither of these two motifs satisfy the assumptions of Theorem~\ref{thm:yes}. This example provides  further evidence for Conjecture~\ref{conj:2} to hold beyond the scenario of Theorem~\ref{thm:yes}.
\end{example}

\section{Dual phosphorylation networks with three intermediates}\label{sec:Hopfsimplifiednetwork}

In this section we focus on the subnetworks of the dual phosphorylation network obtained by keeping three intermediate complexes, and show that none of them admit a Hopf bifurcation. Due to the symmetry of the system pointed out in the introduction, this follows from the following theorem on the networks \ref{eq:3-intmdnetwork2} and \ref{eq:3-intmdnetwork1}. 

\begin{theorem}\label{thm:Hopf}
Hopf bifurcations do not arise in the  networks \ref{eq:3-intmdnetwork2} and \ref{eq:3-intmdnetwork1}, nor in any subnetwork obtained by making some or all reversible reactions irreversible.
\end{theorem}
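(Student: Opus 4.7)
The strategy is to apply Proposition~\ref{cor:hurwitz} to each of the networks $\mathcal{G}_1$, $\mathcal{G}_2$ and every subnetwork obtained by declaring some subset of the reversible reactions irreversible. For each such network we must verify, for all $(h,\ell)\in\R^n_{>0}\times\R^m_{>0}$, that $\det(H_i(h,\ell))>0$ for $i=1,\dots,s-2$ and that $\det(H_{s-1}(h,\ell))>0$ whenever $a_s(h,\ell)>0$. Here $n=8$ and $s=5$, so there are three ``easy'' Hurwitz determinants to be confirmed positive and one critical determinant $\det(H_4)$ whose sign must be related to that of $a_5$. To keep this last polynomial tractable we first trim the networks using Theorem~\ref{thm:yes}.

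A motif-by-motif check will show that, among the three motifs of $\mathcal{G}_1$, only $S_2+F\ce{<=>} FS_2\ce{->} S_1+F$ satisfies hypotheses (A1)--(A5), with $X_1=S_2$, $X_2=F$, $X_3=FS_2$, $c=S_1$, $\delta=1$: the species $F$ is a catalyst in the only other reaction having $F$ as a product, namely $S_1+F\ce{->} S_0+F$, and $S_2$ is not a reactant outside the motif. The motif $S_1+K\ce{<=>} KS_1\ce{->} S_2+K$ violates (A4) because $S_1$ is a reactant of $S_1+F\ce{->} S_0+F$, and $S_0+K\ce{<=>} KS_0\ce{->} S_1+K$ violates (A5)(a) because $K$ is produced non-catalytically by $KS_1\ce{->} S_1+K$. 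An analogous inspection will show that \emph{no} motif of $\mathcal{G}_2$ satisfies the hypotheses of Theorem~\ref{thm:yes}: each $S_1$-containing motif fails (A4) because $S_1$ is a reactant of the other, and the $KS_0$ motif again fails (A5)(a). Thus, by Corollary~\ref{cor:Hopf}, the theorem for $\mathcal{G}_1$ and all of its subnetwork variants reduces to verifying Proposition~\ref{cor:hurwitz} for the reduced network $\mathcal{G}_1'$ obtained by deleting $FS_2\ce{->} S_2+F$, together with the four variants depending on which of the pairs $S_0+K\ce{<=>} KS_0$ and $S_1+K\ce{<=>} KS_1$ is kept reversible. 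The network $\mathcal{G}_2$ and its eight subnetwork variants must be treated directly.

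For each of the resulting twelve reduced networks, the plan is to compute an extreme matrix $E$ of $\ker(N)\cap\R^r_{\geq 0}$, form the Jacobian $J(h,\ell)$ via \eqref{eqn:Jac-convex}, extract the reduced characteristic polynomial $p_{(h,\ell)}$ of \eqref{eq:charpoly}, and assemble the Hurwitz determinants $\det(H_1),\dots,\det(H_4)$. We expect each $\det(H_i)$ with $i\leq 3$ to expand into a polynomial in $(h,\ell)$ with only positive coefficients, so strict positivity on $\R_{>0}^{n+m}$ follows by inspection. For the critical $\det(H_4)$, whose sign is a priori mixed, the objective is a positivity certificate of the form
\[\det(H_4)(h,\ell) \;=\; A(h,\ell) \,+\, B(h,\ell)\,a_5(h,\ell),\]
with $A,B\in\R_{\geq 0}[h,\ell]$ and $A\neq 0$, or alternatively a decomposition $\det(H_4)=P-Q$ with $P,Q\in\R_{\geq 0}[h,\ell]$ in which each monomial of $Q$ is dominated by a product of a monomial of $P$ and a monomial of $a_5$. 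In either form, $a_5>0$ forces $\det(H_4)>0$, and Proposition~\ref{cor:hurwitz} yields the absence of Hopf bifurcations. All the symbolic computation is carried out in Maple and recorded in the supplementary files.

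The principal obstacle will be producing such a certificate for $\det(H_4)$: it is a polynomial with many monomials in the convex parameters, of mixed sign, and there is no a priori reason for a clean decomposition to exist. The Theorem~\ref{thm:yes} reduction eliminates one convex parameter in every variant derived from $\mathcal{G}_1$, which suffices to bring the sign analysis within reach of symbolic algebra; for $\mathcal{G}_2$, where no such reduction is available, the search is substantially heavier and forms the crux of the proof. The outer enumeration over the twelve subnetwork variants is routine once the certificate procedure succeeds in each case.
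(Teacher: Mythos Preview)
Your overall architecture---reduce $\mathcal{G}_1$ via Theorem~\ref{thm:yes} using the $FS_2$ motif, then verify the Hurwitz conditions of Proposition~\ref{cor:hurwitz} for the reduced network $\mathcal{G}_1^r$ and for $\mathcal{G}_2$ directly---matches the paper. Your motif-by-motif analysis of (A1)--(A5) is also accurate. However, two points in the execution diverge from the actual proof.

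First, you have the relative difficulty reversed. In the paper, $\mathcal{G}_2$ is the \emph{easy} case: a direct symbolic computation shows that all four Hurwitz determinants $\det(H_1),\ldots,\det(H_4)$ are polynomials in $(h,\ell)$ with only positive coefficients, so Proposition~\ref{cor:hurwitz} applies immediately, with no need to examine $a_5$ at all. The subnetworks are handled in the same pass by observing that making a reversible pair irreversible corresponds to setting the relevant $\ell_i$ to zero (Lemma~\ref{lem:EmatrixG}), and the positivity of coefficients survives. The hard work lies entirely with $\mathcal{G}_1^r$.

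Second, for $\mathcal{G}_1^r$ the paper does not produce a Positivstellensatz-style certificate $\det(H_4)=A+B\cdot a_5$ with $A,B\in\R_{\ge 0}[h,\ell]$, and there is no indication that such a decomposition exists. Instead, the argument proceeds by exploiting the specific factorisation $a_5(h,\ell)=\big(c_5(h)h_8+b_5(h)\big)\cdot(\text{positive in }\ell)$, so that the sign of $a_5$ depends only on $h$. One then views $\det(H_4)$ as a polynomial in $\ell_1,\ldots,\ell_4$ and shows that \emph{each $\ell$-coefficient}, a polynomial in $h$, is positive on the region $\widetilde a_5(h)>0$. This is done through an eight-case case split on the relative sizes of $h_1$ vs.\ $h_6$, $h_2$ vs.\ $h_7$, $h_3$ vs.\ $h_5$, and the sign of $c_5(h)$; in each case one encodes the inequalities via substitutions $h_i=h_j+v$ and, where a rational bound $v<q$ or $h_8<-b_5/c_5$ is needed, via $v=\tfrac{\mu}{\mu+1}q$ (or $v=q+\mu$), then checks that the resulting numerator has all positive coefficients. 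The certificate you envisage would bypass this machinery, but you would need to either find it or fall back on a case analysis of this kind; the paper records that the computation already takes several hours even with these structural reductions.
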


 The rest of the section is devoted to proving Theorem~\ref{thm:Hopf}, first for  network~\ref{eq:3-intmdnetwork2}, and then for \ref{eq:3-intmdnetwork1}. While the analysis of \ref{eq:3-intmdnetwork1} is straightforward from Proposition~\ref{cor:hurwitz}, the analysis of \ref{eq:3-intmdnetwork2} builds on Theorem~\ref{thm:yes} and necessitates several strategic approaches to deal with the high computational cost.

\subsection{Network~\ref{eq:3-intmdnetwork2}}
In this subsection we prove Theorem~\ref{thm:Hopf} for the network~\ref{eq:3-intmdnetwork2} and its subnetworks. 
We start by noting that 
by setting  $S_2,F,$ and $FS_2$ as $X_1,X_2,$ and $X_3$ respectively and $\delta=1$, 
assumptions (A1)-(A5)  hold for \ref{eq:3-intmdnetwork2} and for the subnetwork obtained by removing the reaction $KS_1\ce{->} S_1+K$ and/or $KS_0\ce{->} S_0+K$ from \ref{eq:3-intmdnetwork2}. 
Thus, in view of Theorem~\ref{thm:yes} and Corollary~\ref{cor:Hopf}, 
the absence of Hopf bifurcations for \ref{eq:3-intmdnetwork2} and for its subnetworks follows from the 
absence of Hopf bifurcations for the following reduced network and its subnetworks:
\begin{align}\label{eq:3-intmdnetwork2reduced}
\mathcal{G}_1^r \colon
\begin{split}
 \qquad S_0 + K & \ce{<=>[\k_1][\k_2]} KS_0 \ce{->[\k_3]} S_1+K   \ce{<=>[\k_5][\k_6]} KS_1 \ce{->[\k_7]} S_2+K \\
S_2 + F  & \ce{->[\k_{8}]} FS_2 \ce{->[\k_{9}]} S_1+F   \ce{->[\k_4]} S_0+F.
\end{split}
\end{align}

After ordering the species as $K, F,S_0,S_1,S_2,KS_0,FS_2,KS_1$,  
the stoichiometric,  reactant and extreme matrices are  
{\scriptsize
	\begin{align*}
	N=\begin{bmatrix}
	-1 & 1 & 1 & 0 & -1 & 1 & 1 & 0 & 0 
	\\
	0 & 0 & 0 & 0 & 0 & 0 & 0 & -1 & 1 
	\\
	-1 & 1 & 0 & 1 & 0 & 0 & 0 & 0 & 0 
	\\
	0 & 0 & 1 & -1 & -1 & 1 & 0 & 0 & 1 
	\\
	0 & 0 & 0 & 0 & 0 & 0 & 1 & -1 & 0 
	\\
	1 & -1 & -1 & 0 & 0 & 0 & 0 & 0 & 0 
	\\
	0 & 0 & 0 & 0 & 0 & 0 & 0 & 1 & -1 
	\\
	0 & 0 & 0 & 0 & 1 & -1 & -1 & 0 & 0 
	\end{bmatrix}, \ 
	B=\begin{bmatrix}
	1 & 0 & 0 & 0 & 1 & 0 & 0 & 0 & 0 
	\\
	0 & 0 & 0 & 1 & 0 & 0 & 0 & 1 & 0 
	\\
	1 & 0 & 0 & 0 & 0 & 0 & 0 & 0 & 0 
	\\
	0 & 0 & 0 & 1 & 1 & 0 & 0 & 0 & 0 
	\\
	0 & 0 & 0 & 0 & 0 & 0 & 0 & 1 & 0 
	\\
	0 & 1 & 1 & 0 & 0 & 0 & 0 & 0 & 0 
	\\
	0 & 0 & 0 & 0 & 0 & 0 & 0 & 0 & 1 
	\\
	0 & 0 & 0 & 0 & 0 & 1 & 1 & 0 & 0 
	\end{bmatrix}, \ 
	E=\begin{bmatrix}
	1&1&0&0\\
	0&1&0&0\\
	1&0&0&0\\
	1&0&0&0\\
	0&0&1&1\\
	0&0&0&1\\
	0&0&1&0\\
	0&0&1&0\\
	0&0&1&0
	\end{bmatrix}.
	\end{align*}
}

The Jacobian matrix $J(h,\l)$ from \eqref{eqn:Jac-convex}  for $(h,\l) \in \R^8_{>0} \times \R^4_{>0}$ is
{\scriptsize
	\begin{align*}
	\begin{bmatrix}
	(-\l_{1}-\l_{2}-\l_{3}-\l_{4}) h_{1} & 0 & (-\l_{1}-\l_{2}) h_{3} & (-\l_{3}-\l_{4}) h_{4} & 0 & (\l_{1}+\l_{2}) h_{6} & 0 & (\l_{3}+\l_{4}) h_{8} 
	\\
	0 & -\l_{3} h_{2} & 0 & 0 & -\l_{3} h_{5} & 0 & \l_{3} h_{7} & 0 
	\\
	(-\l_{1}-\l_{2}) h_{1} & \l_{1} h_{2} & (-\l_{1}-\l_{2}) h_{3} & \l_{1} h_{4} & 0 & \l_{2} h_{6} & 0 & 0 
	\\
	(-\l_{3}-\l_{4}) h_{1} & -\l_{1} h_{2} & 0 & (-\l_{1}-\l_{3}-\l_{4}) h_{4} & 0 & \l_{1} h_{6} & \l_{3} h_{7} & \l_{4} h_{8} 
	\\
	0 & -\l_{3} h_{2} & 0 & 0 & -\l_{3} h_{5} & 0 & 0 & \l_{3} h_{8} 
	\\
	(\l_{1}+\l_{2}) h_{1} & 0 & (\l_{1}+\l_{2}) h_{3} & 0 & 0 & (-\l_{1}-\l_{2}) h_{6} & 0 & 0 
	\\
	0 & \l_{3} h_{2} & 0 & 0 & \l_{3} h_{5} & 0 & -\l_{3} h_{7} & 0 
	\\
	(\l_{3}+\l_{4}) h_{1} & 0 & 0 & (\l_{3}+\l_{4}) h_{4} & 0 & 0 & 0 & (-\l_{3}-\l_{4}) h_{8} 
	\end{bmatrix}.
	\end{align*}
}

The two pairs of reversible reactions are represented in the second and fourth columns of $E$, and making one such pair irreversible corresponds to letting $\l_2=0$ or $\l_4=0$, as we show in Lemma~\ref{lem:EmatrixG}.
Since $\rk(N)=5$,  the reduced characteristic polynomial from \eqref{eq:charpoly} has the form
\begin{align*}
p_{(h,\l)}= z^5+a_1(h,\l)z^4+a_2(h,\l)z^3+a_3(h,\l)z^2+a_4(h,\l)z+a_5(h,\l).
\end{align*}

We compute the Hurwitz determinants  in \texttt{Maple} \cite{maple} and  find that 
for $i=1,2,3$, all coefficients of the polynomial
$\det(H_i)$   are positive, and hence it is positive after evaluated at any $(h,\l) \in \R^8_{>0} \times \R^4_{>0}$. 
However, both the polynomials $\det(H_4)$ and $a_5$ have coefficients of both signs. In view of Proposition~\ref{cor:hurwitz},  to show that the network $\mathcal{G}_1^r$ in~\eqref{eq:3-intmdnetwork2reduced} does not admit a Hopf bifurcation all we need is to  show that
if  $a_5(h,\l) >0$ for some $(h,\l) \in \R^8_{>0} \times \R^4_{>0}$, then $\det(H_4(h,\l)) \neq 0.$
In fact, we will show a stricter condition, namely that for $(h,\l) \in \R^8_{>0} \times \R^4_{>0}$,
\begin{equation}\label{eq:goal}
 a_5(h,\l) >0 \qquad \Rightarrow \qquad \det(H_4(h,\l))> 0,
\end{equation}
holds for $\mathcal{G}_1^r$ and also when $\l_2\cdot \l_4=0$. With this in place, the proof of Theorem~\ref{thm:Hopf} will be in place.

 \medskip
\noindent
\textit{\textbf{The term $a_5(h,\l)$. }} 
A computation gives that 
{\small \begin{align*}
	a_5(h,\l)&=\big((-h_1 h_2 h_3 h_4 - h_1 h_2 h_3 h_5 - h_1 h_2 h_4 h_5 - h_1 h_3 h_4 h_5 - h_2 h_3 h_4 h_5 + h_2 h_3 h_4 h_6 -  h_2 h_4 h_5 h_6  
 \\&+ h_3 h_4 h_5 h_6 - h_1 h_3 h_4 h_7 + h_1 h_4 h_5 h_7 + h_3 h_4 h_5 h_7 + h_3 h_4 h_6 h_7 + 
	h_3 h_5 h_6 h_7 + h_4 h_5 h_6 h_7) h_8 
 \\& +h_1 h_3 h_5 h_6 h_7 + 2 h_1 h_4 h_5 h_6 h_7 + h_3 h_4 h_5 h_6 h_7\big)\ \l_1 (\l_1 + \l_2) 
	\l_3^2  (\l_3 + \l_4).
	\end{align*}}
We write $a_5(h,\l)$ as
\begin{align*}
a_5(h,\l)&=(c_5(h) h_8 +b_5(h))\, \l_1 (\l_1 + \l_2) \l_3^2  (\l_3 + \l_4)
\end{align*}
where
{\small \begin{equation}\label{eq:bc}
	\begin{aligned}
	c_5(h)&:=(h_6-h_1)(h_2+h_5+h_7)h_3 h_4+(h_7-h_2)(h_1+h_3+h_6)h_4 h_5 +(h_6 h_7-h_1 h_2)h_3 h_5, \\
	b_5(h)&:= (h_1 h_3  + 2 h_1 h_4  + h_3 h_4 )h_5 h_6 h_7.
	\end{aligned}
	\end{equation}}%
In particular, $b_5(h)$ and $c_5(h)$ depend only on $h$. As the sign of $a_5(h,\ell)$ does not depend on $\l_1,\dots,\l_4>0$, we define
\[\widetilde{a}_5(h):=c_5(h) h_8 +b_5(h). \]
Then \eqref{eq:goal} follows if for $h \in \R^8_{>0}$,
\begin{equation}\label{eq:goal2}
 \widetilde{a}_5(h) >0 \qquad \Rightarrow \qquad \det(H_4(h,\l))> 0 \text{ for all }\l \in \R^4_{>0}
\end{equation}
holds.

\medskip
\noindent
\textit{\textbf{Cases. }}
We show \eqref{eq:goal2} by considering  the following six cases on how the entries of $h$ relate to each other: 

\medskip
\begin{center}
	\begin{tabular}{|l|l| c| }
		\hline
		\textbf{Cases}  & \textbf{Subcase} & \textbf{Condition on $h$}  \\
		\hline
		Case $1$ & 1a & $h_6\geq h_1,\ h_7\geq h_2$  \\ 			\cline{2-3}
		& 1b & $h_1> h_6,~~h_7\geq h_2,\ h_5\geq h_3$\\
		\cline{2-3}
		& 1c & $h_6\geq h_1,~~h_2> h_7,\ h_3\geq h_5$\\
		\hline
		Case $2$ & 2a & $h_1 > h_6,~~h_7> h_2,~~h_3 > h_5$, $c_5(h) \geq 0$  \\
		\cline{2-3}
		& 2b & $h_6> h_1,\ h_2> h_7,\ h_5 > h_3$, $c_5(h)\geq 0$  \\
		\hline
		Case $3$ & 3a & $h_1> h_6,\ h_7\geq h_2,\ h_3 > h_5$, $c_5(h)< 0$ \\ \cline{2-3}
		& 3b & $h_6\geq h_1,\ h_2> h_7,\ h_5 > h_3$, $c_5(h)< 0$ \\ 	\cline{2-3}
		& 3c & $h_1> h_6,\ h_2 >  h_7$ ($c_5(h)< 0$) \\
		\hline
	\end{tabular}
\end{center}

\smallskip

These cases cover all scenarios. In particular note that if $h_2=h_7$ and $h_1>h_6$, then $c_5(h)<0$ (similarly if $h_1=h_6$ and $h_2>h_7$).

We check \eqref{eq:goal2} using \texttt{Maple} and encode the inequalities describing each case using auxiliary variables. That is, each inequality 
$h_i> h_j$  is encoded by making the substitution $h_i=h_j+v_i$ into $\det(H_4)$. If the resulting polynomial  only has positive coefficients, then $\det(H_4)$ is positive under the tested case. The case $h_i=h_j$ is treated separately. 

In some cases below, we will have to impose that a certain variable, say $v$, is smaller or larger than a rational function $q$  in the rest of the variables and with positive denominator. This will be implemented with the substitutions
\[ v= \tfrac{\mu}{\mu+1}\, q , \ \text{for }v<q, \qquad\text{and}\qquad v= q+\mu, \ \text{for }v> q,\]
where $\mu$ is then assumed to take positive values. We will  consider the numerator of $\det(H_4)$ after performing these substitutions and check that all coefficients are positive.

\medskip
\noindent
\textit{\textbf{Breaking down the problem. }}
The implication \eqref{eq:goal2}  follows from the following proposition, as the subnetworks obtained by making some reversible reactions irreversible, correspond to setting $\l_2=0$ and/or $\l_4=0$ in $\det(H_4)$ and in $a_5(h,\l)$. 

 \begin{proposition}\label{prop:pos}
 After viewing  $\det(H_4)$  as a polynomial in $\l_1,\l_2,\l_3,\l_4$, each coefficient is a polynomial in $h_1,\ldots,h_8$ that only attains positive values when evaluated at any  $h\in \R^8_{>0}$ for which 
$\widetilde{a}_5(h) >0$. 
Moreover, this remains true after setting $\l_2$ and/or $\l_4$ to zero.
\end{proposition}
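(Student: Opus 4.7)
The plan is a case-by-case computational verification. First I would compute $\det(H_4)$ symbolically in \texttt{Maple} from the Jacobian $J(h,\l)$ displayed above, and regard it as a polynomial in the twelve variables $h_1,\ldots,h_8,\l_1,\ldots,\l_4$. To establish the proposition, it suffices to produce, for each of the eight subcases 1a, 1b, 1c, 2a, 2b, 3a, 3b, 3c listed in the table, an invertible substitution from a tuple of positive auxiliary parameters onto the subcase region such that $\det(H_4)$ (or its numerator after clearing a positive denominator) becomes a polynomial in these parameters and the remaining $h_i,\l_j$ with only positive coefficients. Positivity of every coefficient of the substituted polynomial implies in particular that each $\l$-coefficient $c_{ijkl}(h)$, viewed as a polynomial in the auxiliary and remaining $h$-variables, has only positive coefficients, and hence attains only positive values on that subcase; the corresponding claim after setting $\l_2=0$ or $\l_4=0$ follows immediately, as discarding monomials preserves positivity of the survivors.

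For the five subcases 1a, 1b, 1c, 2a, 2b the sign inequalities entail $c_5(h)\geq 0$, so $\widetilde{a}_5(h)>0$ is automatic because $b_5(h)>0$ on $\R^8_{>0}$. In the Case 1 subcases I would encode the defining inequalities through substitutions of the form $h_1=h_6+v_1$ (when $h_1>h_6$) or $h_6=h_1+v_6$ (when $h_6\geq h_1$), and similarly for the pairs $(h_2,h_7)$ and $(h_3,h_5)$, with each $v$-variable positive. A \texttt{Maple} check that every coefficient of the resulting polynomial in the variables $\{h_i,v_j,\l_k\}$ is positive then closes the subcase. In cases 2a and 2b the additional constraint $c_5(h)\geq 0$ requires one further substitution, of the form $h_i=q(h)+\mu$ or $h_i=\tfrac{\mu}{\mu+1}q(h)$ with $q$ chosen so that the constraint becomes tautological, exactly as described in the preparatory paragraph before the proposition; positivity of the numerator coefficients is again verified in \texttt{Maple}.

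Cases 3a, 3b, 3c are the most delicate: here $c_5(h)<0$, so $\widetilde{a}_5(h)>0$ is equivalent to $h_8<-b_5(h)/c_5(h)$, a quantity which is positive. I would implement this constraint by the rational substitution $h_8=\tfrac{\mu}{\mu+1}\bigl(-b_5(h)/c_5(h)\bigr)$ with $\mu>0$, combined with sign-encoding substitutions for the relevant differences among $h_1,h_6$, $h_2,h_7$, and $h_3,h_5$. After passing to the numerator, obtained by multiplying through by an appropriate power of the positive quantity $-c_5(h)(\mu+1)$, one gets a polynomial in $\mu$, the auxiliary difference variables, and the remaining $h_j$'s, whose coefficients are then inspected in \texttt{Maple}. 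Boundary equalities such as $h_1=h_6$, $h_2=h_7$, or $h_3=h_5$ are covered either by specializing the relevant auxiliary variable to $0$ in an adjacent open subcase and observing that the surviving monomials are still positive, or by treating the equality as a separate closed substitution.

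The main obstacle will be case 3: the rational substitution for $h_8$ yields an expression whose cleared numerator has large degree and many thousands of monomials, and there is no a priori reason why every coefficient in the auxiliary variables must come out positive. If \texttt{Maple} reports a coefficient of ambiguous sign, the intended remedy is to subdivide the subcase further (for instance by comparing one more pair of $h_i$'s) or to regroup monomials before substitution so as to factor out a manifestly positive common factor, iterating until all remaining monomials are positive. The complete list of substitutions and the positive-coefficient lists would be recorded in the accompanying \texttt{Maple} files of the supplementary material.
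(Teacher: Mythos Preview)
Your strategy coincides with the paper's: a case split into the eight subcases, encoding each inequality by an affine substitution and verifying in \texttt{Maple} that all resulting monomial coefficients are positive. Case~1 is handled exactly as you describe. The one structural point you omit is the check that $\det(H_4)$ does not become the zero polynomial upon setting $\l_2$ or $\l_4$ to zero; the paper does this by exhibiting specific surviving monomials ($\l_1^7\l_3^3$ and $\l_1^9\l_3$), and you should make this explicit rather than rely on ``positivity of the survivors''.

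The substantive difference lies in the tactics for Cases~2 and~3. You propose to perform the rational substitution (for $v_1$ in Case~2, for $h_8$ in Case~3) directly into the fully expanded $\det(H_4)$ and then inspect the cleared numerator. The paper instead exploits that the coefficients $a_i(h,\l)$ of the reduced characteristic polynomial are \emph{linear} in the substitution variable: it writes $\alpha_i=c_i v+b_i$ with $c_i,b_i$ treated as fresh symbols, substitutes these into the \emph{generic} degree-five Hurwitz determinant, performs the rational substitution $v=-\tfrac{\mu}{\mu+1}\tfrac{u_1}{u_0}$ (resp.\ $h_8=-\tfrac{\mu}{\mu+1}\tfrac{b_5}{c_5}$) at this symbolic level, and observes that the coefficient of $\mu^i$ in the numerator carries the factor $u_0^{4-i}$ (resp.\ $c_5^{4-i}$). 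Since $u_0$ and $c_5$ are negative in the relevant subcases, pulling out this factor with the correct sign yields quantities $M_i$ whose positivity is then checked after substituting the actual expressions for $c_i,b_i$ and the remaining inequalities. This factorization is what makes the verification tractable: the full computation still takes about nine hours, whereas your direct expansion would carry the expanded powers $c_5^{4-i}$ through every monomial and is likely infeasible. Your fallback of ``regrouping monomials to factor out a manifestly positive common factor'' is, in effect, exactly this trick, but you would need to discover the linearity-in-$v$ structure and the resulting $u_0^{4-i}$ factorization to make it work.
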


We explain here the rationale and computational approach taken in the accompanying \texttt{Maple} file to show the claim in Proposition~\ref{prop:pos} for all the cases in the table. 
First, we notice that the second part follows from the first if one guarantees that $\det(H_4)$ does not vanish identically in any of the cases when $\l_2\cdot \l_4=0$. This is done by showing that the coefficient  of $l_1^7l_3^3$ in $\det(H_4)$ is positive when $h_5\geq h_3$, and that the coefficient of $l_1^9l_3$ is positive when $h_3\geq h_5$. Hence, for all cases, one coefficient is positive even if $\l_2\cdot \l_4=0$, and hence $\det(H_4)\neq 0$ as a polynomial.

\medskip
\noindent
\textit{\textbf{Case 1.  }}
In the  \textit{Subcases 1a, 1b, 1c}, we perform the substitutions by introducing two or three new variables. For  \textit{Subcase 1a}, we let
$h_6=h_1+v_1$ and $h_7=h_2+v_2$. With this, every coefficient of $\det(H_4)$ in the $\l$'s, as a polynomial in the $h$'s and $v$'s, only has positive coefficients. The same is true if $v_1=v_2=0$, and hence it remains true if only one of $v_1,v_2$ is nonzero.

Similarly, for  \textit{Subcase 1b}, we make the substitution  $h_1=h_6+v_1$, $h_7=h_2+v_2$, and $h_5=h_3+v_3$; for  \textit{Subcase 1c}, 
we make the substitution  $h_6=h_1+v_1$, $h_2=h_7+v_2$, and $h_3=h_5+v_3$. In both cases, the coefficients of $\det(H_4)$ in the $\l$'s  are polynomials in the $h$'s and $v$'s with all coefficients positive. The same holds if $v_1=v_2=v_3=0$, covering all possibilities for \textit{Subcases 1b} and  \textit{1c}. 

These computations give that Proposition \ref{prop:pos} holds under the additional restrictions of Case 1. 
For this case, we do not need to consider the sign of $\widetilde{a}_5(h)$, as $\det(H_4)$ is positive independently of that. This will not occur in the remaining cases.

\medskip
\noindent
\textit{\textbf{Case 2. }}
In this situation, in addition to the inequalities in the $h$'s, we need to impose that $c_5(h)\geq 0$.
In \textit{Subcase 2a}, we make the substitution $h_1=h_6+v_1$, $h_7=h_2+v_2$, and $h_3=h_5+v_3$, and it turns out that the coefficients of  $\det(H_4)$ are not polynomials with all coefficients positive.

We perform the substitution into $c_5(h)$ and obtain a linear polynomial in $v_1$ with negative leading coefficient: 
\begin{equation}\label{eq:v1}
\begin{aligned}
-\big( (h_4  h_5+2  h_2  h_4 + h_2  h_5)(h_5+  v_3)   +  h_4  v_2 v_3\big) v_1
+  h_5  v_2 (   2  h_4  h_6 +  (h_4+h_6) (h_5+ v_3) ).
\end{aligned}
\end{equation}
Therefore, for $c_5(h)\geq 0$ in  \textit{Subcase 2a},  $v_1$ needs to be smaller than the only root of this polynomial:
\begin{equation}\label{eq:s1}
s_1:=\frac{h_5  v_2 (   2  h_4  h_6 +  (h_4+h_6) (h_5+ v_3) )}{(h_4  h_5+2  h_2  h_4 + h_2  h_5)(h_5+  v_3)   +  h_4  v_2 v_3 }.
\end{equation}

We could now impose  $v_1\leq  s_1$ in the coefficients of $\det(H_4)$, but the computational cost is minimized if we proceed as follows. 
We make first only the substitution $h_1=h_6+v_1$ in  $p_{(h,\ell)}$, and observe that all coefficients become linear in $v_1$. 
We compute the generic form of the Hurwitz determinant $\det(H_4)$ for a polynomial of degree five $z^5+\alpha_1z^{4}+\ldots+\alpha_5$ and make the substitutions
\[\alpha_i= c_i v_1 + b_i,\quad i=1,\dots,4,\qquad \alpha_5 = (u_0\, v_1 + u_1) h_8 + (u_2\, v_1+u_3), \]
where $c_i,b_i,u_i$ are unspecified parameters.
This gives rise to a polynomial $q$ in $c_1,\dots,c_4$, $b_1,\dots,b_4,u_0,\dots,u_3,v_1,h_8$. 
In this polynomial, we perform the substitution
\[v_1 = - \frac{\mu}{\mu+1} \frac{u_1}{u_0}, \qquad \mu>0,\]
to impose first that $v_1<s_1$. 
We obtain a rational function in $\mu$, with positive denominator and numerator of degree $4$. The coefficient of $\mu^i$ has the factor $u_0^{4-i}$. 
As $u_0<0$ in the situation under study (it is the coefficient of $v_1$ in \eqref{eq:v1}) and we want to verify that $q$ is positive, we 
consider 
\[ M_i = \frac{(-1)^i}{u_0^{4-i}} (\text{coefficient of }\mu^i\text{ in } q). \]
When $v_1=s_1$ (to impose $c_5(h)=0$), the resulting polynomial is $M_4$. 

All we need at this point is to verify $M_i> 0$ when we 
consider the actual expressions of $c_0,\dots,c_4,b_0,\dots,b_4,u_0,\dots,u_3$ from   $p_{(h,\ell)}$, and additionally make the substitutions $h_7=h_2+v_2$ and $h_3=h_5+v_3$. 
After all these substitutions, for each $M_i$, we collect the coefficients of $M_i$ in $\l_1,\l_2,\l_3,\l_4$ and verify that they are all positive. To avoid memory issues, we view first each coefficient as a polynomial in $v_2,v_3$ and then verify that its coefficients are positive. 

The computations in the accompanying file show that the coefficients are indeed positive.

\smallskip
The \textit{Subcase 2b} is analogous. We make the substitution $h_6=h_1+v_1$, $h_2=h_7+v_2$, and $h_5=h_3+v_3$. The coefficients of $\det(H_4)$ are not all positive. 
Performing the substitution into $c_5(h)$ yields the following polynomial:
\begin{equation*}
\begin{aligned}
- \big( (h_1h_3  +2h_1h_4 + h_3h_4)(h_3+v_3)   +  h_4v_1 v_3\big)v_2 + h_3v_1((h_3+v_3)(h_4 + h_7) + 2h_4h_7 ).\end{aligned}
\end{equation*}
The polynomial is linear in $v_2$, has negative leading term, and positive constant term. So $c_5(h)\geq 0$  if $v_2$ is smaller than
\begin{equation}\label{eq:s2}
s_2:=\frac{h_3v_1((h_3+v_3)(h_4 + h_7) + 2h_4h_7)}{ ( h_1h_3  +2h_1h_4 + h_3h_4)(h_3+v_3)   +  h_4v_1 v_3 }.
\end{equation}
We proceed identically as in  \textit{Subcase 2a}, but with the role of $v_1$  taken by $v_2$ now. 
We reduce the problem of checking that five coefficients $M_0,\dots,M_4$ are positive, and this is verified in the accompanying file.

\smallskip
\noindent
\textit{\textbf{Case 3. }}
The three subcases have $c_5(h)<0$. 
As $b_5(h)>0$, we require $h_8<-\tfrac{b_5(h)}{c_5(h)}$ for $\widetilde{a}_5(h)>0$. We proceed similarly to Case 2. We view first the coefficients of $p_{(h,\ell)}$ as linear polynomials in $h_8$, by expressing them as $c_i h_8 + b_i$ for $i=0,\dots,5$ and with $b_i,c_i$ treated as undetermined parameters. (We will later substitute $c_5$ by $c_5(h)$ and $b_5$ by $b_5(h)$ from \eqref{eq:bc}.)

We perform this substitution into a generic Hurwitz determinant $\det(H_4)$ to obtain a polynomial $q$
in $c_0,\dots,c_5,b_0,\dots,b_5,h_8$. 
With this in place, we perform the substitution 
\[h_8 = - \frac{\mu}{\mu+1} \frac{b_5}{c_5},\]
into $q$, to impose that $h_8<-\frac{b_5}{c_5}$. 
We obtain a rational function with positive denominator and numerator of degree $4$ in $\mu$. As in Case 2, $c_5^{4-i}$ is a multiple of the coefficient of $\mu^i$, for $i=0,\dots,3$. For $i=4$, the coefficient has the factor $-(b_4c_5-b_5c_4)$. 
So, as $c_5<0$ in the current situation, we define
\begin{align*}
M_i &= \frac{(-1)^i}{c_5^{4-i}} (\text{coefficient of }\mu^i\text{ in }q), \quad i=0,\dots,3,\\
M_4 &= \frac{1}{M_5} (\text{coefficient of }\mu^4\text{ in }q), \\
M_5 &= -(b_4c_5-b_5c_4).
\end{align*}
We substitute $c_i,b_i$ by their expressions from $p_{(h,\ell)}$ and it turns out that $M_0$ is directly positive. 
For $M_1,\dots,M_5$, we consider each subcase separately. 

For \textit{Subcase 3c}, we perform the substitution $h_1 = h_6 + v_1, h_2 = h_7 + v_2$. 
For  \textit{Subcase 3b}, we need $v_2>s_2$ with $s_2$ as in \eqref{eq:s2} to guarantee that $c_5(h)<0$, so we perform the substitution 
$h_6 = h_1 + v_1, h_2 = h_7 +s_2+ v_4,h_5=h_3+v_3$. Similarly, for  \textit{Subcase 3a} we perform the substitution 
$h_1 = h_6 + s_1+v_4, h_7 = h_2 + v_2,h_3=h_5+v_3$ with $s_1$ as in \eqref{eq:s1}. 
In all subcases, we obtain that all coefficients of the $M_i$'s in the $\l$'s have positive coefficients.  \textit{Subcase 3a} is computationally the most expensive. 

As $M_0$ is not zero, the boundary cases $h_2=h_7$ or $h_1=h_6$ follow as well.

\smallskip
With these computations in place, Proposition~\ref{prop:pos} and hence also  Theorem~\ref{thm:Hopf} for network  \ref{eq:3-intmdnetwork2} hold. The whole computation for the proof of Proposition~\ref{prop:pos} took about $9.4$ hours in a Macbook Pro with 32 GB RAM and chip Apple M1 Pro.

\subsection{Network  \ref{eq:3-intmdnetwork1}}\label{sec:remainingnetworks}\label{sec:network2}
We now prove Theorem~\ref{thm:Hopf} for network  \ref{eq:3-intmdnetwork1}:
\begin{align*}
	\begin{split}
	S_0 + K \ce{<=>[\k_1][\k_2]} KS_0 \ce{->[\k_3]} S_1+K  \ce{<=>[\k_7][\k_8]} KS_1 \ce{->[\k_9]} S_2+K \\
	S_2 + F   \ce{->[\k_{10}]} S_1+F  \ce{<=>[\k_4][\k_5]} FS_1 \ce{->[\k_6]} S_0+F.
	\end{split}
	\end{align*}

With the order of species $K,F,S_0,S_1,S_2,KS_0,FS_1,KS_1$,  the stoichiometric, reactant, and   extreme matrices are
{\scriptsize
	\begin{align*}
	N&=\begin{bNiceArray}{cccccccccc}
	-1&1&1&0&0&0&-1&1&1&0 \\
	0&0&0&-1&1&1&0&0&0&0\\ 
	-1&1&0&0&0&1&0&0&0&0\\
	0&0&1&-1&1&0&-1&1&0&1\\
	0&0&0&0&0&0&0&0&1&-1\\
	1&-1&-1&0&0&0&0&0&0&0\\
	0&0&0&1&-1&-1&0&0&0&0\\
	0&0&0&0&0&0&1&-1&-1&0
	\end{bNiceArray},\quad
	B=\begin{bNiceArray}{cccccccccc}
	1&0&0&0&0&0&1&0&0&0 \\
	0&0&0&1&0&0&0&0&0&1\\ 
	1&0&0&0&0&0&0&0&0&0\\
	0&0&0&1&0&0&1&0&0&0\\
	0&0&0&0&0&0&0&0&0&1\\
	0&1&1&0&0&0&0&0&0&0\\
	0&0&0&0&1&1&0&0&0&0\\
	0&0&0&0&0&0&0&1&1&0
	\end{bNiceArray}, 
 \end{align*}
 \begin{align*}
	E^{\top}=\begin{bNiceArray}{cccccccccc}
 1&0&1&1&0&1&0&0&0&0\\
 1&1&0&0&0&0&0&0&0&0\\
 0&0&0&0&0&0&1&0&1&1\\
 0&0&0&1&1&0&0&0&0&0\\
 0&0&0&0&0&0&1&1&0&0
 \end{bNiceArray}.
\end{align*}
}

As with \ref{eq:3-intmdnetwork2}, we have that  $p_{(h,\ell)}$ has degree $s=5$. 
In this case, for $i=1,\dots,4$, the Hurwitz determinants $\det(H_i(h,\l))$  are polynomials in $h,\ell$ with only positive coefficients. The same remains true when setting $\l_2,\l_4$ and $\l_5$ to zero, which are the parameters corresponding to the pairs of reversible reactions in \ref{eq:3-intmdnetwork1}. 
See the accompanying \texttt{Maple} file for details. 

Hence, Proposition~\ref{cor:hurwitz}   readily gives Theorem~\ref{thm:Hopf} for  \ref{eq:3-intmdnetwork1}.

\section{Proof of Theorem~\ref{thm:yes}}\label{sec:proof}

In this section we prove Theorem~\ref{thm:yes}. We do it in several steps, to illustrate where the assumptions (A1)-(A5) of 
Theorem~\ref{thm:yes} play a role. 

Given a vector $v=(v_1,\ldots,v_k)\in \R^k$ and $1\leq i<j\leq k$, we denote by $v_{i:j}$ the vector of size $j-i+1$ given by $(v_i,v_{i+1},\ldots, v_{j-1}, v_j)$. We let $\mathbf{0}_{ i \times j}$ denote the zero matrix of size $i\times j$.

Let $G$ be a network containing motif~\eqref{motif2}. Assuming $G$ has $r$ reactions, we order the set of reactions such that the three reactions of the motif are last and ordered as indicated by the following labels: 
 \begin{equation}\label{motif}
 y \ce{<=>[\k_{r-1}][\k_r]} Y \ce{->[\k_{r-2}]} y'.
\end{equation}
Let $G'$ be the network obtained by removing the reaction with label $\k_r$, and with the same order of species and reactions as $G$. In order to prove Theorem~\ref{thm:yes}, we follow these steps:
\begin{itemize}
\item Step 1: Understand how the Jacobian matrices, $J(h,\ell)$ for $G$ and $J'(h,\ell')$  for $G'$, relate to each other, assuming only (A2), that is, that species $Y$ only appears in the motif. 
\item Step 2: Obtain the specific form of $J(h,\ell)$ under all assumptions (A1)-(A5). 
\item Step 3: 
Compute relevant parts of the reduced characteristic polynomial of $J(h,\ell)$ and conclude Theorem~\ref{thm:yes}. 
\end{itemize}

\medskip
\noindent
\textbf{Step 1. Jacobian matrices of $G$ and $G'$.}
We let $N\in \Z^{n\times r}$ and $N'\in \Z^{n\times (r-1)}$ denote the stoichiometric matrices of $G$ and $G'$, and likewise, 
$B\in \Z^{n\times r}$ and $B'\in \Z^{n\times (r-1)}$ denote their reactant matrices. 
Let $N''\in \Z^{n\times (r-3)}$ and $B''\in \Z^{n\times (r-3)}$ be the submatrices of $N$ and $B$ obtained by keeping the first $r-3$ reactions, and let $\rho_2, \rho_1\in \R^n$ be respectively the column vectors corresponding to the reactions $Y \ce{->} y'$ and   $y \ce{->} Y$.  By the choice of ordering of the reactions of $G$ and $G'$, we then have:
\begin{align} \label{eq:Nblock}
N &= \begin{bNiceArray}{c|ccc} 	N'' &  \rho_2 & \rho_1 & -\rho_1 	\end{bNiceArray}, 
& N' &=  \begin{bNiceArray}{c|cc} 	N'' &  \rho_2 & \rho_1  	\end{bNiceArray}.
\end{align}
Similarly, by letting $e_Y\in \R^n$ be the column vector with all  entries equal to zero except for the entry corresponding to $Y$, which is equal to $1$, and $e_y\in \R^n$ be the column vector with the stoichiometric coefficients of $y$, we have
\begin{align} \label{eq:Bblock}
B &= \begin{bNiceArray}{c|ccc} 	B'' &  e_Y & e_y  & e_Y 	\end{bNiceArray}
& B' &=  \begin{bNiceArray}{c|cc} 	B'' &  e_Y & e_y   	\end{bNiceArray}.
\end{align}

\begin{lemma}\label{lem:EmatrixG}
Let $G$ be a network that contains motif~\eqref{motif} and such that $Y$ does not appear in any other reaction outside the motif. Let $E'\in \R^{(r-1)\times (m-1)}$ be the extreme matrix for the network $G'$. With the conventions above, the extreme matrix for network $G$ is
\begin{align*}
	E= \begin{bNiceArray}{c|c}
	E' & \Block{2-1}{E_m} \\ 
	 \mathbf{0}_{ 1 \times (m-1)} & 
	\end{bNiceArray}  \in \R^{r\times m}, 
\end{align*}
where $E_m^{\top}:=(0,\ldots,0,1,1)\in\R^{r}$.  
\end{lemma}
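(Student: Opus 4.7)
The plan is to analyze the flux cone $\ker(N)\cap\R^{r}_{\ge 0}$ directly, using the block structures in \eqref{eq:Nblock} and \eqref{eq:Bblock} together with the fact that $Y$ participates in no reaction outside the motif. The single structural fact that drives everything is the balance equation for the species $Y$ at any $v\in\ker(N)$: since $Y$ is produced only by reaction $r-1$ and consumed only by reactions $r-2$ and $r$, the row of $N$ corresponding to $Y$ forces
\[
v_{r-1} = v_{r-2} + v_{r},
\]
so in particular $v_{r-1}\geq v_{r}$ for every $v$ in the cone. This is the only consequence of hypothesis (A2) that I will need.

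Next I would prove a decomposition lemma: every $v \in \ker(N)\cap\R^{r}_{\ge 0}$ can be written uniquely as
\[
v = \begin{pmatrix} v' \\ 0 \end{pmatrix} + v_{r}\, E_m, \qquad v' := (v_1,\ldots,v_{r-2},\, v_{r-1}-v_{r}) \in \R^{r-1}.
\]
The balance equation above ensures $v'\geq 0$. A direct computation using \eqref{eq:Nblock} shows that $N\bigl(\binom{v'}{0}\bigr) = N'\,v'$, and since $N v = 0$ and $N E_m = \rho_1 - \rho_1 = 0$, we get $v' \in \ker(N') \cap \R^{r-1}_{\ge 0}$. Conversely, for any $w \in \ker(N')\cap\R^{r-1}_{\ge 0}$ and any $\alpha\geq 0$, the vector $\binom{w}{0} + \alpha E_m$ lies in $\ker(N)\cap\R^{r}_{\ge 0}$. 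This bijective correspondence reduces the extremal analysis of $G$ to that of $G'$ plus the single extra vector $E_m$.

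From this decomposition it follows that the candidate generators $\binom{E_1'}{0},\ldots,\binom{E_{m-1}'}{0}, E_m$ span the cone as a conic hull: given $v$, decompose as above, then write $v' = \sum_i \lambda_i E_i'$ for some $\lambda_i\geq 0$, so $v = \sum_i \lambda_i \binom{E_i'}{0} + v_r E_m$. It remains to show that each of these $m$ vectors generates an extreme ray of $\ker(N)\cap\R^{r}_{\ge 0}$. For $E_m$, the support is $\{r-1,r\}$ and the only non-negative kernel elements with this support satisfy $v_{r-2}=0$ and $v_{r-1}=v_r$ by the $Y$-balance, giving a one-dimensional face. For $\binom{E_i'}{0}$, suppose $\binom{E_i'}{0} = u+w$ with $u,w$ in the cone; applying the decomposition to $u$ and $w$ and comparing the $r$-th coordinate shows that both have vanishing $E_m$-component, hence $u = \binom{u'}{0}$ and $w=\binom{w'}{0}$ with $u',w'\in\ker(N')\cap\R^{r-1}_{\ge 0}$ and $u'+w' = E_i'$. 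Extremality of $E_i'$ in $G'$ then gives that $u,w$ are non-negative multiples of $\binom{E_i'}{0}$.

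The argument is mostly bookkeeping with block matrices; the only non-obvious step is the $Y$-balance inequality $v_{r-1}\geq v_r$, which is where assumption (A2) is used, and I do not expect any genuine obstacle beyond keeping the index conventions straight between $G$ and $G'$.
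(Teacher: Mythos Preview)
Your proposal is correct and follows essentially the same route as the paper: both arguments hinge on the $Y$-balance equation $v_{r-1}=v_{r-2}+v_r$ (from (A2)) and the subtraction $v\mapsto v-v_rE_m$, which the paper applies directly to an extreme vector with $\omega_r\neq 0$ while you package it as a decomposition lemma for the whole cone. The only difference is organizational---your version makes the generation step and the extremality of each $\binom{E_i'}{0}$ more explicit, whereas the paper handles these more tersely via the face $\{v_r=0\}$.
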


\begin{proof}  
	Since $N'$ and $N$ differ only in the last column,  $\omega\in \ker(N') \cap \mathbb{R}^{r-1}_{\geq0}$ if and only if $(\omega,0)\in \ker(N) \cap \mathbb{R}^{r}_{\geq0}$. It follows that the first $m-1$ columns of $E$ are extreme vectors. 
 Furthermore,   $E_m\in\ker(N) \cap \mathbb{R}^{r}_{\geq0}$ by construction (see also \cite[Lemma 8]{TF}).
Let   $\omega\in \ker(N) \cap \mathbb{R}^{r}_{\geq0}$ be an extreme vector such that  $\omega_r\neq 0$. All that is left is to show that $\omega$ is a multiple of $E_m$. 
By considering the row of $N$ corresponding to species $Y$, it follows that 
\[-\omega_{r-2}+\omega_{r-1}-\omega_{r}=0 \qquad \Rightarrow \qquad   \omega_{r-1}= \omega_{r-2}+ \omega_{r}\geq  \omega_{r} \neq 0. \] 
Therefore the vector $v:=\omega-\omega_r E_m$ belongs to $\mathbb{R}^{r}_{\geq0}$  and its support is strictly contained in the support of  $\omega$. Since $v\in \ker(N)$ and $w$ is an extreme vector, $v=0$ and we conclude that $\omega$ is a multiple of $E_m$ as desired.
\end{proof}
 
 We define the following $n\times n$ matrices: 
\begin{align*}
J''(L)  & := N'' \diag(L) (B'')^\top & \text{for} &  L\in \R^{r-3}, \\
J_1(\alpha,\beta) &:= \alpha\, \rho_2 e_Y^\top + \beta\, \rho_1 e_y^\top & \text{for } & \alpha,\beta \in \R, \\
J_2(\alpha) &:= \alpha \, \rho_1 (e_y -e_Y)^\top  & \text{for } & \alpha \in \R. 
\end{align*}
We define also the following maps:
\begin{align*}
\pi \colon & \R^m   \rightarrow \R^{m-1} \qquad \ell=(\ell_1,\dots,\ell_m) \mapsto \pi(\ell)=(\ell_1,\dots,\ell_{m-1}), \\
\iota \colon & \R^{m-1}   \rightarrow \R^{m} \qquad \ell'=(\ell_1,\dots,\ell_{m-1}) \mapsto \iota(\ell')=(\ell_1,\dots,\ell_{m-1},0).
\end{align*}

\begin{proposition}\label{prop:jac}
With the notation above, 
let  $J(h,\ell)$ be the Jacobian matrix  of $G$ in convex parameters $(h,\ell) \in \R^n_{>0},\times \R^m_{>0}$, and let $J'(h,\ell')$ be the Jacobian matrix  of $G'$ in convex parameters $(h,\ell') \in \R^n_{>0}\times \R^{m-1}_{>0}$. It holds
\begin{align*}
J(h,\ell) &:= J'(h,\pi(\ell)) +  J_2(\ell_{m})\diag(h) ,
\\ 
J'(h,\ell') &:= \big( J''( L_{1:r-3} )  + J_1(L_{r-2},L_{r-1})       \big) \diag(h) , \qquad \text{where} \quad L:= E'  \ell'. 
  \end{align*}
In particular,
\[ J'(h,\ell') = J(h,\iota(\ell')). \]  
\end{proposition}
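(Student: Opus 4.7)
The plan is to prove the three identities by direct bookkeeping using the block decompositions of $N$, $B$, and $E$, expanding the matrix product $N\diag(E\ell)B^\top$ as a sum over columns.

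\textbf{Step 1: unpacking $E\ell$.} Using Lemma~\ref{lem:EmatrixG} together with the fact that $E_m$ has support $\{r-1,r\}$ with both entries equal to $1$, I would first show that for any $\ell\in\R^m$ the vector $E\ell\in\R^r$ satisfies
\begin{align*}
(E\ell)_i &= (E'\pi(\ell))_i & & \text{for }i=1,\ldots,r-2,\\
(E\ell)_{r-1} &= (E'\pi(\ell))_{r-1}+\ell_m,\\
(E\ell)_r &= \ell_m.
\end{align*}
This is the only place where the explicit shape of $E_m$ enters.

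\textbf{Step 2: column-wise decomposition of $J(h,\ell)$.} Writing $N\diag(E\ell)B^\top=\sum_{i=1}^r (E\ell)_i\, N_{:i}\,B_{:i}^\top$ and using the block forms in \eqref{eq:Nblock} and \eqref{eq:Bblock}, the first $r-3$ terms collapse to $N''\diag((E\ell)_{1:r-3})(B'')^\top=J''((E'\pi(\ell))_{1:r-3})$ by Step~1. The remaining three terms are
\[
(E\ell)_{r-2}\,\rho_2 e_Y^\top + (E\ell)_{r-1}\,\rho_1 e_y^\top + (E\ell)_r(-\rho_1) e_Y^\top.
\]
Substituting the Step~1 expressions, the $\ell_m$ contributions from columns $r-1$ and $r$ combine into $\ell_m\,\rho_1(e_y-e_Y)^\top=J_2(\ell_m)$, while the remaining pieces are exactly $J_1\bigl((E'\pi(\ell))_{r-2},(E'\pi(\ell))_{r-1}\bigr)$. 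Multiplying on the right by $\diag(h)$ gives
\[
J(h,\ell) = \bigl( J''((E'\pi(\ell))_{1:r-3}) + J_1((E'\pi(\ell))_{r-2},(E'\pi(\ell))_{r-1}) \bigr)\diag(h) + J_2(\ell_m)\diag(h).
\]

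\textbf{Step 3: identifying $J'(h,\ell')$ and concluding.} Repeating the same column-wise expansion for $J'(h,\ell')=N'\diag(E'\ell')(B')^\top\diag(h)$, with $L:=E'\ell'\in\R^{r-1}$, yields the second identity $J'(h,\ell')=(J''(L_{1:r-3})+J_1(L_{r-2},L_{r-1}))\diag(h)$ immediately, since $N'$ and $B'$ are obtained from $N$ and $B$ by dropping the last column. Applying this with $\ell'=\pi(\ell)$ (so $L=E'\pi(\ell)$) and comparing with the formula from Step~2 gives the first identity $J(h,\ell)=J'(h,\pi(\ell))+J_2(\ell_m)\diag(h)$. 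Finally, for $\ell'\in\R^{m-1}_{>0}$, one has $\pi(\iota(\ell'))=\ell'$ and $\iota(\ell')_m=0$, so $J_2(\iota(\ell')_m)=0$ and the first identity specializes to $J(h,\iota(\ell'))=J'(h,\ell')$.

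There is no real obstacle: the argument is entirely a computation on the block structure, and the only slightly delicate point is getting the cancellation $\ell_m\rho_1 e_y^\top-\ell_m\rho_1 e_Y^\top=J_2(\ell_m)$ right, which is forced by the fact that the reactions $y\to Y$ and $Y\to y$ share the same stoichiometric column up to sign but have distinct reactant vectors $e_y$ and $e_Y$.
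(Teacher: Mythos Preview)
Your proposal is correct and follows essentially the same approach as the paper: both proofs use Lemma~\ref{lem:EmatrixG} to split $E\ell$ into the $E'\pi(\ell)$ part and the $\ell_m E_m$ part, then exploit the block forms of $N$ and $B$ to identify the contribution of the last two columns as $J_2(\ell_m)$ and the remaining contribution as $J'(h,\pi(\ell))$. The only cosmetic difference is that you phrase the computation as a column-wise rank-one expansion $\sum_i (E\ell)_i\,N_{:i}B_{:i}^\top$, whereas the paper writes the equivalent splitting at the level of diagonal matrices, $\diag(E\ell)=\diag(\iota(E'\pi(\ell)))+\diag(\ell_m E_m)$.
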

\begin{proof}
The result follows readily by using the block structures of the matrices defining $J(h,\ell)$. 
As $J(h,\l)=N \diag(E\l)B^{\top} \diag(h)$, it is enough to compute $N \diag(E\l)B^{\top}$. By Lemma~\ref{lem:EmatrixG}:
\[(E\ell)_i = \begin{cases}   (E' \pi(\ell) )_i & i=1,\dots,r-2,  \\  (E' \pi(\ell) )_{r-1} + \ell_m & i=r-1, \\ \ell_m & i=r.
\end{cases}  \]
Therefore, $\diag(E\ell) = \diag( \iota(E'\pi(\ell)) ) + \diag( \ell_m E_m)$, with $E_m^\top=(0,\dots,0,1,1)$ as in  Lemma~\ref{lem:EmatrixG}. 
Hence
\[N \diag(E\l)B^{\top} = N  \diag(\iota(E'\pi(\ell))) B^\top 
+  N  \diag( \ell_m E_m) B^\top. 
\] 
Now, using the block structures of $N$ and $B$ in \eqref{eq:Nblock} and \eqref{eq:Bblock}, we have
\begin{align*}
N  \diag( \ell_m E_m) B^\top &=   \Big[ N'' \ |\ \rho_2 \ \rho_1 \ - \rho_1 \Big]   \diag( \ell_m E_m) \Big[ B''\ |\ e_Y \ e_y \ e_Y \Big]^\top \\ &= 
\ell_m \Big[  \textbf{0}_{n\times (r-2)} \ |  \  \rho_1  \ \  -   \rho_1 \Big]    \Big[ B''\ |\ e_Y \ e_y \ e_Y \Big]^\top = \ell_m  \rho_1( e_y -   e_Y )^\top= J_2(\ell_m). 
\end{align*}
Moreover, the following equality is easy to verify  
\begin{align*}
N  \diag(\iota(E'\pi(\ell))) B^\top  &=   N'  \diag(E'\pi(\ell)) (B')^\top. 
\end{align*}
This shows the first part, as $J'(h,\pi(\ell))=N'  \diag(E'\pi(\ell)) (B')^\top \diag(h)$.
 To find the structure of $J'(h,\pi(\ell))$, it is enough to show that $ N'  \diag(L) (B')^\top= J''( L_{1:r-3} )  + J_1(L_{r-2},L_{r-1}) $ for any $L\in \R^{r-1}$. 
We have
\begin{multline*}
N'  \diag(L) (B')^\top  = \Big[ N'' \ |\ \rho_2 \ \rho_1 \Big]\diag(L)   \Big[ B''\ |\ e_Y \ e_y  \Big]^\top \\ 
= N''\diag(L_{1:r-3}) (B'')^\top + L_{r-2} \rho_2 e_Y^\top + L_{r-1} \rho_1 e_y^\top 
=  J''( L_{1:r-3} )  + J_1(L_{r-2},L_{r-1})
\end{multline*}
as desired. 
\end{proof}

\bigskip
\noindent
\textbf{Step 2. Jacobian matrices for networks satisfying assumptions (A1)-(A5).}
In the previous computation, we wrote the Jacobian matrix of $G$ in terms of that of $G'$ assuming only (A2). If (A1) and (A3) hold,  the complexes $y,y'$ of motif \eqref{motif} are $y=X_1+X_2$ and $y'=c+\delta X_2$, with $c$ not depending on $X_1,X_2$. 
We are implicitly ordering the species $X_1,X_2,X_3$ as the first three species. 
For $e_{c}$ the column vector of stoichiometric coefficients of $c$ in the species $X_4,\dots,X_n$, we have that
\begin{equation}\label{eq:special}
\begin{aligned}
\rho_2 & =(0,\delta,-1, e_{c}^\top)^\top,& \rho_1 &= (-1,-1,1,0,\dots,0)^\top, \\
 e_{Y} & = (0,0,1,0,\dots,0)^\top, &  e_y & = (1,1,0,\dots,0)^\top.
 \end{aligned}
 \end{equation}
With this in place, we obtain the following result. 

\begin{proposition}\label{Jacblock}
Assume $G$ satisfies assumptions (A1)-(A5) of Theorem~\ref{thm:yes}. With the notation above, 
let  $n_1^\top,n_2^\top\in \R^{r-3}$ denote the first two rows of $N''\in \R^{n\times (r-3)}$, and let $N'''\in \R^{(n-3) \times (r-3)}$ denote the submatrix of $N''$ consisting of the last $r-3$ rows. Similarly, let $b_2^\top$ denote the second row of $B''\in \R^{n\times (r-3)}$ 
 and $B'''\in \R^{(n-3) \times (r-3)}$ denote the submatrix of $B''$ consisting of the last $r-3$ rows. 

Then,  the Jacobian matrix $J(h,\ell)$   of $G$ in convex parameters $h\in \R^n_{>0}, \ell\in \R^m_{>0}$ is of the form
\[\begin{bNiceArray}{ccc|c}
- (\ell_m+L_{r-1}) & - (\ell_m+L_{r-1}) & \ell_m   &  n_1^\top \diag(L_{1:r-3}) (B''')^\top  \\ 
- (\ell_m+L_{r-1}) & -(\ell_m+L_{r-1}) & \ell_m +  \delta L_{r-2}   & (1-\delta) n_2^\top \diag(L_{1:r-3}) (B''')^\top    \\
\ell_m+L_{r-1} &  \ell_m+L_{r-1} & -(\ell_m +L_{r-2})  &  \mathbf{0}_{1\times (n-3)}    \\ \hline
  \mathbf{0}_{ (n-3) \times 1} &  \delta  N''' \diag(L_{1:r-3}) b_2 &  L_{r-2} e_{c}   & K
\end{bNiceArray} \diag(h),  \] 
 with $L:=E' \pi(\ell)$ and $K\in \R^{(n-3)\times (n-3)}$ not depending on $\ell_m$. 
 \end{proposition}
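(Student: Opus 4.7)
The plan is to combine Proposition~\ref{prop:jac} with the structural assumptions (A1)--(A5). From that proposition,
\[J(h,\ell) = \bigl(J''(L_{1:r-3}) + J_1(L_{r-2},L_{r-1}) + J_2(\ell_m)\bigr)\diag(h),\qquad L = E'\pi(\ell),\]
so it suffices to evaluate the three summands under (A1)--(A5) and read off the block structure.

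For the ``motif'' summand $J_1(L_{r-2},L_{r-1}) + J_2(\ell_m)$, I would substitute the explicit vectors $\rho_1,\rho_2,e_y,e_Y$ from \eqref{eq:special}, valid under (A1) and (A3). A direct computation of the outer products $\rho_2 e_Y^\top$, $\rho_1 e_y^\top$, $\rho_1(e_y-e_Y)^\top$ shows they are each supported only on columns among $\{1,2,3\}$; combining them with scalar multiples $L_{r-2},L_{r-1},\ell_m$ yields precisely the upper-left $3\times 3$ block stated in the proposition, together with the column-$3$ vector $L_{r-2}\,e_{c}$ in rows $4,\dots,n$ coming from the $\rho_2 e_Y^\top$ term. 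This is also where $\delta$ enters the $(2,3)$ entry, through the second coordinate of $\rho_2$.

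For the remaining summand $J''(L_{1:r-3}) = N''\diag(L_{1:r-3})(B'')^\top$, whose $(i,j)$ entry is $\sum_k N''_{ik} L_k B''_{jk}$, I would use (A2), (A4), and (A5) to locate the zeros. Assumption (A2) forces $N''_{3k}=B''_{3k}=0$, killing row $3$ and column $3$; (A4) forces $B''_{1k}=0$, killing column $1$. The rows and columns indexed by $X_2$ split on $\delta$. When $\delta=0$, (A5)(b) forbids $X_2$ from being a reactant outside the motif, so $B''_{2k}=0$ and column $2$ of $J''$ vanishes entirely. When $\delta=1$, (A5)(a) forces $X_2$ to appear in every outside reaction only as a local catalyst (so $N''_{2k}=0$ and row $2$ of $J''$ vanishes), and additionally requires $\beta_{1k}=0$ in every such reaction, which combined with $\alpha_{1k}=0$ from (A4) gives $N''_{1k}B''_{2k}=0$ for every $k$ and kills the $(1,2)$ entry. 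These two cases are captured uniformly by the prefactors $(1-\delta)$ and $\delta$ in the block formula. The bottom-right $(n-3)\times(n-3)$ block is $K := N'''\diag(L_{1:r-3})(B''')^\top$, which depends only on $L = E'\pi(\ell)$ and hence not on $\ell_m$.

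Adding the two summands and right-multiplying by $\diag(h)$ produces the block form claimed in the proposition. The main obstacle is the bookkeeping for the $X_2$-indexed row and column: one must read (A5)(a) as asserting that, whenever $X_2$ is present outside the motif, it appears only as a local catalyst (so $N''_{2k}\equiv 0$), which together with the clause ``$X_1$ is not in the product'' and (A4) forces the joint vanishing needed to clean up the $(1,2)$ and $(2,j)$ entries of $J''$ for $\delta=1$.
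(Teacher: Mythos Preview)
Your proof is correct and follows essentially the same route as the paper: decompose $J(h,\ell)$ via Proposition~\ref{prop:jac} into $J''(L_{1:r-3})+J_1(L_{r-2},L_{r-1})+J_2(\ell_m)$, compute $J_1$ and $J_2$ explicitly from the vectors in~\eqref{eq:special}, and then use (A2), (A4), (A5) to identify the vanishing entries of $J''$, splitting on $\delta$. Your final remark about the reading of (A5)(a) is also on point---the paper's proof likewise uses that $n_2^\top=0$ and that $b_{2i}\neq 0$ forces $n_{1i}=0$, which requires interpreting (A5)(a) in exactly the way you describe.
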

\begin{proof} 
We find the three summands of the description of $J(h,\ell)$ in  Proposition~\ref{prop:jac}. 
Considering the vectors $\rho_1,\rho_2,e_y,e_Y$ in \eqref{eq:special}, we have from definition that
\begin{equation}\label{eq:J1}
J_1(L_{r-2},L_{r-1}) =L_{r-2}\, \rho_2 e_Y^\top + L_{r-1}\, \rho_1 e_y^\top = 
\begin{bNiceArray}{ccc|c}
-L_{r-1} & -L_{r-1} & 0 & \textbf{0}_{1\times (n-3)}   \\
-L_{r-1} & -L_{r-1} & \delta L_{r-2} & \textbf{0}_{1\times (n-3)}   \\
L_{r-1} & L_{r-1} & -L_{r-2}  & \textbf{0}_{1\times (n-3)}    \\ \hline
\textbf{0}_{ (n-3) \times 1} & \textbf{0}_{ (n-3) \times 1} & L_{r-2} e_{c} & \textbf{0}_{ (n-3) \times (n-3)}
\end{bNiceArray} .
\end{equation}
Similarly,
\begin{equation}\label{eq:J2}
J_2(\ell_m) =\ell_m \, \rho_1 (e_y^\top -  e_Y^\top)  = 
 \begin{bNiceArray}{ccc|c}
- \ell_m & - \ell_m & \ell_m & \textbf{0}_{1\times (n-3)}    \\
- \ell_m & - \ell_m & \ell_m & \textbf{0}_{1\times (n-3)}   \\
 \ell_m &  \ell_m & -\ell_m & \textbf{0}_{1\times (n-3)}    \\ \hline
 & \textbf{0}_{ (n-3) \times 3} &  & \textbf{0}_{ (n-3) \times (n-3)}
\end{bNiceArray} .
\end{equation}

The third row of $N''$ and $B''$ are zero by (A2), and the first  row of $B''$ is zero by (A4). Therefore
\begin{align}
J''(L_{1:r-3}) & = N'' \diag(L_{1:r-3}) (B'')^\top =
 \begin{bNiceArray}{c}
n_1^\top \\
n_2^\top \\
\textbf{0}_{ (r-3) \times 1} \\ \hline
N''' 
\end{bNiceArray}   \diag(L_{1:r-3})   \begin{bNiceArray}{c|c|c|c}
\textbf{0}_{ 1\times (r-3) } & b_2 & \textbf{0}_{ 1\times (r-3) }  & B''' 
\end{bNiceArray} \nonumber \\
& =
\begin{bNiceArray}{ccc|c}
0 & n_1^\top \diag(L_{1:r-3}) b_2    & 0 &  n_1^\top \diag(L_{1:r-3}) (B''')^\top  \\ 
0 & n_2^\top \diag(L_{1:r-3}) b_2 & 0   & n_2^\top \diag(L_{1:r-3}) (B''')^\top    \\
0 & 0 & 0   &  \textbf{0}_{1\times (n-3)}    \\ \hline
  \textbf{0}_{ (n-3) \times 1} &    N''' \diag(L_{1:r-3}) b_2  &    \textbf{0}_{(n-3) \times 1}  & K
\end{bNiceArray} , \label{eq:Jpp}
\end{align}
for some matrix $K\in \R^{(n-3)\times (n-3)}$ not depending on $\ell_m$.

If $\delta=0$, then (A5) gives  that $b_2=0$, and in particular, the second column of $J''(L_{1:r-3})$ also is zero.
If $\delta=1$, then (A5)  gives:
\begin{itemize}
\item If $b_{2i}\neq 0$, then $n_{1i}=0$ and hence $n_1^\top \diag(L_{1:r-3}) b_2=0$.
\item  $n_2^\top=0$, and hence the second row of $J''(L_{1:r-3})$  also is zero. 
\end{itemize}

We conclude that for all $\delta=0,1$, the submatrix of $J''(L_{1:r-3})$ with row and column indices in $\{1,2,3\}$ is the zero matrix, and further 
{\small \[ N''' \diag(L_{1:r-3}) b_2=\delta   N''' \diag(L_{1:r-3}) b_2, \quad n_2^\top \diag(L_{1:r-3}) (B''')^\top=(1-\delta) n_2^\top \diag(L_{1:r-3}) (B''')^\top. \] }%
By adding \eqref{eq:Jpp}, \eqref{eq:J1} and \eqref{eq:J2} and using Proposition~\ref{prop:jac}, we obtain the statement. 
\end{proof}

\medskip
\noindent
\textbf{Step 3. Compare characteristic polynomials. } With Proposition~\ref{Jacblock} in place, 
we proceed to compute relevant coefficients of $p_{(h,\l)}\in \R[z]$. If $s=\rk(N)$ is the degree of $p_{(h,\l)}$, then the coefficient of $z^{k}$ in $p_{(h,\l)}$ 
is given by the sum of determinants of principal minors of size $s-k$ of $J(h,\ell)$, and hence 
is a polynomial in $h,\l$.  
We decompose $p_{(h,\ell)}$  as the sum of two polynomials as
 \begin{align}\label{eq:psplitting}
  p_{(h,\l)}&:=\alpha_{(h,\l)}+\beta_{(h_{4:n},\l)} \ \in \R[z],
 \end{align}
 such that $\beta_{(h_{4:n},\l)}$ is the part not depending on $h_1,h_2,h_3$; formally, 
 $\beta_{(h_{4:n},\l)} = p_{(\widetilde{h},\l)}$ with $\widetilde{h}=(0,0,0,h_4,\dots,h_n)$. 
 We write $\alpha_{(h,\l)}$ as
\begin{align}\label{eq:alpha}
	&\alpha_{(h,\ell)}:=z^s+d_1z^{s-1}+d_2z^{s-2}+\ldots+d_{s-1}z+d_s,
\end{align}
where in each $d_i$ we omit explicit reference to $(h,\ell)$ for simplicity.

\begin{proposition}\label{prop:charpolycoeffG}
Recall the description of the matrix $J(h,\ell)$ from Proposition~\ref{Jacblock}. 
	Let $T_i$ be the sum of the principal minors of $K$ of size $i$ for $i=1,\ldots,s-2$, and let as usual $L:= E' \pi(\ell)\in \R^{r-1}$. The coefficients of the polynomial $\alpha_{(h,\ell)}$   in \eqref{eq:alpha} are given by:
\begin{align*}
d_1 & = - (\l_m+L_{r-1}) h_1 - (\l_m+L_{r-1})h_2 - (\l_m+L_{r-2})h_3,  \\
d_2 & = d_1T_1 - (\l_m+L_{r-1})L_{r-2}h_1h_3+(1-\delta)(\l_m+L_{r-1})L_{r-2}h_2h_3, \\
d_{k} & = 
 d_1 T_{k-1}  +  (\l_m+L_{r-1}) \big(  \delta h_1h_2   S_k^{(1)} + h_1h_3   S_k^{(2)}  + (1-\delta) h_2h_3   S_k^{(3)}\big)
 \qquad  \text{for }\  3\leq k\leq~s,
	\end{align*}
where 
$S_k^{(1)}$, $S_k^{(2)}$, $S_k^{(3)}$ are polynomials in $h_4,\dots,h_n,\l_1,\dots,\ell_{m-1}$.

Furthermore, the polynomial $\beta_{(h_{4:n},\l)}$  defined in \eqref{eq:psplitting} does not depend on $\ell_m$. 
\end{proposition}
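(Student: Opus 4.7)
The coefficient $a_k(h,\ell)$ of $z^{s-k}$ in $p_{(h,\ell)}$ equals (up to sign) the sum of principal $k$-minors of $J(h,\ell)$, and since $J=[\cdots]\diag(h)$, each $\det(J[I])$ carries the monomial factor $\prod_{j\in I}h_j$. The plan is to isolate $d_k$ from $\beta$ by splitting the sum of minors according to $I':=I\cap\{1,2,3\}$: the $I'=\emptyset$ contribution is precisely $\beta_{(h_{4:n},\ell)}$, while the minors with $I'\neq\emptyset$ produce the $d_k$. The independence of $\beta$ from $\ell_m$ then follows at once: setting $h_1=h_2=h_3=0$ annihilates the first three columns of $J$, so $\det(zI_n-J(\widetilde h,\ell))=z^{3}\det(zI_{n-3}-K\diag(h_{4:n}))$, and by Proposition~\ref{Jacblock} the block $K$ does not involve $\ell_m$.

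Setting $M:=K\diag(h_{4:n})$, the next step is to read four decisive structural features off the matrix in Proposition~\ref{Jacblock}: (i) column~$1$ of $J$ is supported on rows $\{1,2,3\}$, where it equals $-(\ell_m+L_{r-1})h_1(1,1,-1)^\top$; (ii) column~$2$ coincides with $(h_2/h_1)(\text{column }1)$ on rows $\{1,2,3\}$ and vanishes on rows $\{4,\dots,n\}$ when $\delta=0$; (iii) row~$3$ is supported on columns $\{1,2,3\}$, on which it is the negative of row~$2$; and (iv) row~$2$ vanishes on columns $\{4,\dots,n\}$ when $\delta=1$. From (i)--(iv) it follows that $\det(J[I])=0$ whenever $\{1,2\}\subseteq I$ and $\delta=0$, or $\{2,3\}\subseteq I$ and $\delta=1$, or $\{1,2,3\}\subseteq I$ unconditionally. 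These vanishing statements will produce the factors $\delta$ and $(1-\delta)$ in front of $h_1h_2 S_k^{(1)}$ and $h_2h_3 S_k^{(3)}$, and the absence of any $h_1h_2h_3$ monomial in $d_k$.

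The surviving contributions are then computed explicitly. For $|I'|=1$ with $I=\{j\}\cup I''$, features (i)--(iv) together with assumption (A5) force either the $j$-th row or the $j$-th column of $J[I]$ to have only its diagonal entry nonzero, so cofactor expansion gives $\det(J[I])=J_{jj}\det(M[I''])$; summing over $j\in\{1,2,3\}$ and $|I''|=k-1$ yields the $d_1\,T_{k-1}$ term. For $|I'|=2$, the surviving $2\times 2$ sub-determinants of the top $3\times 3$ block evaluate to $(\ell_m+L_{r-1})L_{r-2}$ for the $\{1,3\}$-minor and $(\ell_m+L_{r-1})(1-\delta)L_{r-2}$ for the $\{2,3\}$-minor; expanding the full principal minor about this $2\times 2$ block, and using that the rows of $B$ and columns of $C$ involved in the expansion are $\ell_m$-free, produces $(\ell_m+L_{r-1})\cdot h_ih_j\cdot S_k^{(\cdot)}$ with each $S_k^{(\cdot)}$ a polynomial in $h_4,\dots,h_n$ and $\ell_1,\dots,\ell_{m-1}$ alone. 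The case $\delta=1$ for $h_1h_2 S_k^{(1)}$ is handled analogously from the $\{1,2\}\cup I''$ minors, exploiting the column-$2$ off-diagonal entries below row~$3$ together with the row-$1$ entries of $B$. The bookkeeping for this last case will be the principal technical obstacle: one must verify that the combined contribution is genuinely a multiple of $(\ell_m+L_{r-1})$ and contains neither $h_3$ nor $\ell_m$.
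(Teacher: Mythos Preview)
Your plan is essentially the same as the paper's: both proofs split the sum of principal $k$-minors according to $I'=I\cap\{1,2,3\}$, dispatch $\beta$ via the $\ell_m$-independence of $K$, obtain the $d_1T_{k-1}$ contribution from $|I'|=1$ via the row/column support structure, and extract the factor $(\ell_m+L_{r-1})$ from the surviving $|I'|=2$ minors. The vanishing statements you derive from (i)--(iv) match exactly the paper's observations.

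Two minor imprecisions are worth tightening. First, your feature (iii) as stated (``row $3$ equals $-$row $2$ on columns $\{1,2,3\}$'') holds only when $\delta=1$, since the $(2,3)$-entry is $(\ell_m+\delta L_{r-2})h_3$ while the $(3,3)$-entry is $-(\ell_m+L_{r-2})h_3$; you only \emph{use} it for $\delta=1$, so this is merely a wording issue. Second, in the $|I'|=2$ step your justification ``rows of $B$ and columns of $C$ are $\ell_m$-free'' is necessary but not sufficient: in the Laplace expansion along the two rows indexed by $I'$, terms like $\det\bigl(\begin{smallmatrix} J_{1,3} & * \\ J_{3,3} & 0\end{smallmatrix}\bigr)$ carry an $\ell_m$-dependent factor $J_{3,3}=-(\ell_m+L_{r-2})h_3$, and you need the zero first column of $C$ (from your feature (i)) to kill the complementary minor. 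The paper avoids this bookkeeping by first adding the row indexed by the smaller element of $I'$ to the row indexed by $3$ (or subtracting rows $1,2$ when $I'=\{1,2\}$), which clears the first column of the submatrix except for its $(1,1)$-entry $-(\ell_m+L_{r-1})h_i$, and then expands along that column. This makes the factorization $(\ell_m+L_{r-1})h_ih_j\times(\text{$\ell_m$-free})$ immediate and handles your ``principal technical obstacle'' for $\{1,2\}$ with $\delta=1$ in one stroke.
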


\begin{proof}
Every coefficient of $\beta_{(h_{4:n},\l)}$ does not involve $h_1,h_2,h_3$. Hence, these coefficients arise from the minors not involving rows and columns of $J(h,\ell)$ with index  in $\{1,2,3\}$, that is, minors of $K$. As $K$ is independent of $\ell_m$ by Proposition~\ref{Jacblock}, the last claim follows. 

We denote the submatrix of $J(h,\ell)$ obtained from rows and columns with indices $i_1<\ldots < i_k$ as $M_{\{i_1,\ldots,i_k\}}$, and the corresponding principal minor as  $m_{\{i_1,\ldots,i_k\}}$. 
Since every   coefficient of $\alpha_{(h,\l)}$ has at least one among $h_1,h_2,$ and $h_3$, the coefficient $d_k$ is   the sum of the principal minors of size $k$ obtained from submatrices $M_{\{i_1,\ldots,i_k\}}$ that include at least one of the first three columns of $J(h,\ell)$. In other words, $d_k=\sum_{i_1<\ldots <i_k}m_{\{i_1,\ldots,i_k\}}$ where the sum is over sets such that $\{i_1,\ldots,i_k\}\cap \{1,2,3\}\neq \emptyset$.
	To compute $d_1$, we consider the principal minors of size 1 given by the diagonal entries of $J(h,\l)$ shown in Proposition~\ref{Jacblock}: 
	 \[d_1=m_{\{1\}}+m_{\{2\}}+m_{\{3\}}=- (\l_m+L_{r-1}) h_1 - (\l_m+L_{r-1})h_2 - (\l_m+L_{r-2})h_3.\]
	 
To compute $d_{k}$ for $2\leq k \leq s$, we note that:
\begin{equation}\label{eq:dk}
\begin{aligned}
d_k & = \sum_{3<i_2<\dots< i_k}\sum_{i_1=1}^3m_{\{i_1,\dots,i_k\}}+\sum_{3<i_3<\ldots<i_k}m_{\{1,2,i_3,\ldots,i_k\}}+\sum_{3<i_3<\dots<i_k}m_{\{1,3,i_3,\ldots,i_k\}} \\ & +\sum_{3<i_3<\dots < i_k}m_{\{2,3,i_3,\dots,i_k\}}+\sum_{3<i_4<\dots<i_k}m_{\{1,2,3,i_4,\dots,i_k\}},
\end{aligned}
\end{equation}
where the last summand is zero if $k=2$. 
We consider each term independently now.  We first consider the first summand coming from the terms $m_{\{i_1,\dots,i_{k}\}}$ with $i_1\in \{1,2,3\}$ and $i_j\notin \{1,2,3\}$ for $j={2,\ldots,k}$. By Proposition~\ref{Jacblock}, the matrices $M_{\{i_1,\dots,i_{k}\}}$  satisfy, for $j\geq 2$, that:
\begin{itemize}
\item The entry $(j,1)$ is zero if either $i_1=1$, or both $i_1=2$ and $\delta=0$. 
\item  The entry $(1,j)$ is zero if either $i_1=3$, or both $i_1=2$ and  $\delta=1$. 
\end{itemize}
Hence, 
 \[\sum_{3<i_2<\dots< i_k}\sum_{i_1=1}^3 m_{\{i_1,\ldots,i_k\}} =\left(\sum_{i_1=1}^3  m_{\{i_1\}}\right)\left( \sum_{i_2<\ldots<i_{k}}m_{\{i_2,\ldots,i_k\}} \right)= d_1 T_{k-1}.\]

To compute the other terms, we first notice that a direct computation shows that 
\[ m_{\{1,2\}}=0, \quad m_{\{1,3\}}=- (\l_m+L_{r-1})L_{r-2}h_1h_3,   \quad m_{\{2,3\}}=(1-\delta)(\l_m+L_{r-1})L_{r-2}h_2h_3,\]
which gives that 
	 \begin{align*}
	 d_2&=d_1T_1+m_{\{1,3\}}+m_{\{2,3\}}=d_1T_1 - (\l_m+L_{r-1})L_{r-2}h_1h_3+(1-\delta)(\l_m+L_{r-1})L_{r-2}h_2h_3.
	 \end{align*}

We focus now on $k\geq 3$. To compute $m_{\{1,3,i_3,\ldots,i_k\}}$, 
we add the first row of $M_{\{1,3,i_3,\ldots,i_k\}}$ to its second row (which is the  third row of $J$).  Then the first column of $M_{\{1,3,i_3,\ldots,i_k\}}$ is zero except for the first entry which is $-(\ell_m + L_{r-1})h_1$.  After this operation, the other entries of $M_{\{1,3,i_3,\ldots,i_k\}}$ where $\ell_m$ appears are on the first row, and hence computing the determinant of $M_{\{1,3,i_3,\ldots,i_k\}}$ via Laplace expansion along the first column gives that 
$m_{\{1,3,i_3,\ldots,i_k\}}$ is $(\l_m+L_{r-1})h_1h_3$ times a polynomial in $h_4,\dots,h_n,\l_1,\dots,\ell_{m-1}$. Therefore,
\[ \sum_{3<i_3<\ldots<i_k}m_{\{1,3,i_3,\ldots,i_k\}} = (\l_m+L_{r-1})h_1h_3   S_k^{(2)}, \]
for $S_k^{(2)}$ a polynomial in $h_4,\dots,h_n,\l_1,\dots,\ell_{m-1}$. 

To compute the remaining three summands of $d_k$ in \eqref{eq:dk}, we distinguish between $\delta=0$ and $\delta=1$. 
First, we let $\delta=1$. Then the second and third rows of $J(h,\ell)$ in Proposition~\ref{Jacblock} are linearly dependent and hence, 
\[ m_{\{2,3,i_3,\ldots,i_k\}}=0, \qquad m_{\{1,2,3,i_4,\ldots,i_k\}}=0. \] 
All we need is to compute $m_{\{1,2,i_3,\ldots,i_k\}}$. 
 If we subtract the first row of the submatrix $M_{\{1,2,i_3,\ldots,i_k\}}$ to the second, then  the first column  is zero except for the first entry which is $(\ell_m + L_{r-1})h_1$.  Arguing as above, this gives that 
\[ \sum_{3<i_3<\ldots<i_k}m_{\{1,2,i_3,\ldots,i_k\}} = (\l_m+L_{r-1})h_1h_2   S_k^{(1)}, \]
for $S_k^{(1)}$ a polynomial in $h_4,\dots,h_n,\l_1,\dots,\ell_{m-1}$. 

If $\delta=0$, the first two columns of $J(h,\ell)$ are identical and therefore,  for all $3<i_3<\dots<i_k$, 
\[  m_{\{1,2,i_3,\ldots,i_k\}}=m_{\{1,2,3,i_4,\ldots,i_k\}}=0.\]
To compute $m_{\{2,3,i_3,\ldots,i_k\}}$, adding the first row of  $M_{\{2,3,i_3,\ldots,i_k\}}$ to the second, and arguing as above, we obtain that 
\[ \sum_{3<i_3<\ldots<i_k}m_{\{2,3,i_3,\ldots,i_k\}} = (\l_m+L_{r-1})h_2h_3   S_k^{(3)}, \]
for $S_k^{(3)}$ a polynomial in $h_4,\dots,h_n,\l_1,\dots,\ell_{m-1}$. 

 Putting all this together, we have that for $k\geq 3$,
 \begin{align*}
d_k & =d_1 T_{k-1}  + \delta  (\l_m+L_{r-1})h_1h_2   S_k^{(1)} + (\l_m+L_{r-1})h_1h_3   S_k^{(2)}  + (1-\delta) (\l_m+L_{r-1})h_2h_3   S_k^{(3)} \\
&= d_1 T_{k-1}  +  (\l_m+L_{r-1}) \big(  \delta h_1h_2   S_k^{(1)} + h_1h_3   S_k^{(2)}  + (1-\delta) h_2h_3   S_k^{(3)}\big). 
\end{align*}
as stated. 
This concludes the proof.
\end{proof}

We now have all the ingredients to prove Theorem~\ref{thm:yes}. We prove a stronger version, where the inclusion of sets of reduced characteristic polynomials is made explicit. Then  Theorem~\ref{thm:yes} is a straightforward corollary.

\begin{theorem}\label{thm:charpol}
	Let $G$ be a network that satisfies assumptions (A1)-(A5) of Theorem~\ref{thm:yes}, and let $G'$ be the network obtained by removing the reverse reaction of the motif \eqref{motif}. Then,  there exists a map 
	\[\phi\colon \R_{> 0}^n\times \R_{> 0}^m\to\R_{> 0}^n\times \R_{> 0}^{m-1} 
	\]
	such that $p'_{\phi(h,\l)}=p_{(h,\l)}$ for all $(h,\l)\in \R_{> 0}^n\times \R_{> 0}^m$.
	In particular, 
	 \[\{p_{(h,\l)} :  (h,\l)\in \R^n_{>0}\times \R^m_{>0}\}\subseteq \{p'_{(h,\l')} :  (h,\l')\in \R^n_{>0}\times \R^{m-1}_{>0}\}. \]
\end{theorem}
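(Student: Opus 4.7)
The plan is to construct the map $\phi$ explicitly by matching the coefficients of the reduced characteristic polynomial using the formulas of Proposition~\ref{prop:charpolycoeffG}. Writing $\phi(h, \ell) = (h', \ell')$ and invoking the splitting $p_{(h,\ell)} = \alpha_{(h,\ell)} + \beta_{(h_{4:n}, \ell)}$ from \eqref{eq:psplitting}, the equality $p_{(h,\ell)} = p'_{(h', \ell')}$ separates into matching $\beta$ and matching each coefficient $d_k$ of $\alpha$. Since $\beta$ depends only on the submatrix $K$ and on $h_{4:n}$ and is independent of $\ell_m$ by Proposition~\ref{prop:charpolycoeffG}, the choice $h'_{4:n} = h_{4:n}$ together with the requirement $L'_{1:r-3} = L_{1:r-3}$, where $L' := E' \ell'$, immediately handles $\beta$; under these conditions $K' = K$ and all the $T_i$ also agree.

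What remains is to match the coefficients $d_1, \ldots, d_s$ of $\alpha$. Inspection of Proposition~\ref{prop:charpolycoeffG} shows that $\ell_m$ enters only through the combinations $\ell_m + L_{r-1}$ (as a common prefactor of the nontrivial parts of $d_k$ for $k \ge 2$) and $\ell_m + L_{r-2}$ (arising only in the $h_3$-coefficient of $d_1$). My approach is to prescribe new values $L'_{r-1}, L'_{r-2}$ together with a coordinated rescaling $h'_i = \mu_i h_i$ for $i \in \{1,2,3\}$, and to solve for the scalars $\mu_i$ and for $L'_{r-1}, L'_{r-2}$ from the requirements that $d_1$ agrees and that the $h_1 h_3, h_2 h_3$ terms of $d_2$ agree. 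The higher coefficients $d_k$ for $k \ge 3$ should then fall into line via the common factor $\ell_m + L_{r-1}$ and because the auxiliary polynomials $S_k^{(i)}$ survive the substitution. Finally, the resulting $L' = (L_{1:r-3}, L'_{r-2}, L'_{r-1}) \in \R^{r-1}$ must be verified to lie in the positive image of $E'$; by Lemma~\ref{lem:EmatrixG}, which identifies $E'$ with the first $m-1$ columns of $E$ restricted to the first $r-1$ rows, this reduces to an adjustment of a few coordinates of $\pi(\ell)$ absorbing the effect of $\ell_m$.

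The main obstacle is the algebraic reconciliation of the constraints from $d_1$ and $d_2$: a naive attempt with $L'_{r-1} = \ell_m + L_{r-1}$, $L'_{r-2} = \ell_m + L_{r-2}$, and $h' = h$ matches $d_1$ but produces $L'_{r-1} L'_{r-2} = (\ell_m + L_{r-1})(\ell_m + L_{r-2})$ instead of the required $(\ell_m + L_{r-1}) L_{r-2}$ in the mixed term of $d_2$. A consistent solution requires the coupled modification of $h_1, h_2, h_3$ and of $L'_{r-1}, L'_{r-2}$ sketched above, and its feasibility crucially uses the structural degeneracies enforced by assumptions (A1)--(A5) and exhibited in Proposition~\ref{Jacblock}: rows $2$ and $3$ of $J(h,\ell)$ are linearly dependent when $\delta = 1$, and columns $1$ and $2$ are proportional when $\delta = 0$. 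These degeneracies reduce the effective number of independent constraints and make the apparently overdetermined matching problem solvable. Once $\phi$ is defined and seen to take values in $\R^n_{>0} \times \R^{m-1}_{>0}$, the asserted inclusion of polynomial sets is immediate.
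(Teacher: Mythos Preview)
Your high-level plan matches the paper's: split $p_{(h,\ell)}=\alpha+\beta$, keep $h'_{4:n}=h_{4:n}$ so that $\beta$ and all $T_i$ are preserved, and then match the coefficients $d_k$ of $\alpha$ using Proposition~\ref{prop:charpolycoeffG}. The divergence, and the gap, is in how you propose to realise the matching.

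You suggest modifying both $(h_1,h_2,h_3)$ and $(L'_{r-2},L'_{r-1})$. This is problematic for two reasons. First, the freedom in $L'$ is largely illusory: since $Y$ appears only in the motif, the $Y$-row of $N'$ forces every vector in $\ker(N')$ to have equal $(r-2)$- and $(r-1)$-entries, so $L'_{r-2}=L'_{r-1}$ automatically; moreover, there is no reason the cone $E'(\R^{m-1}_{>0})$ should allow you to move $L'_{r-2}=L'_{r-1}$ while pinning $L'_{1:r-3}$. Second, and more seriously, your claim that ``the auxiliary polynomials $S_k^{(i)}$ survive the substitution'' is false once $L'_{r-2}\neq L_{r-2}$. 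In the block form of Proposition~\ref{Jacblock} the third column carries the entries $L_{r-2}e_c$, and these enter the minors defining $S_k^{(2)}$ (and analogously $S_k^{(1)},S_k^{(3)}$). Hence the $S_k^{(i)}$ depend on $L_{r-2}$, not only on $L_{1:r-3}$, and changing $L'_{r-2}$ produces, for each $k\geq 3$, a genuinely new constraint rather than the single prefactor condition you anticipate.

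The paper avoids all of this by taking $\ell'=\pi(\ell)$, so that $L'=L$ and the quantities $T_i$, $S_k^{(i)}$ are literally unchanged. With this choice Proposition~\ref{prop:charpolycoeffG} reduces the matching of \emph{all} $d_k$ to just three scalar equations in $(h'_1,h'_2,h'_3)$:
\[
\Lambda = L_{r-1}h'_1+L_{r-1}h'_2+L_{r-2}h'_3,\qquad
(\ell_m+L_{r-1})h_1h_3=L_{r-1}h'_1h'_3,
\]
together with $(\ell_m+L_{r-1})h_1h_2=L_{r-1}h'_1h'_2$ when $\delta=1$, or $(\ell_m+L_{r-1})h_2h_3=L_{r-1}h'_2h'_3$ when $\delta=0$. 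These are solved explicitly via a quadratic (the paper writes the solution with a square root $\sqrt{\gamma_1}$), and positivity of $h'_1,h'_2,h'_3$ follows because $0<\gamma_1<\Lambda^2$. Your proposal should therefore drop the attempt to perturb $L'$ and instead carry out this three-equation solve; the ``coupled modification of $L'_{r-1},L'_{r-2}$'' you call necessary is in fact the step that breaks the argument.
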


\begin{proof}
We prove the statement by explicitly constructing the map  
\[ \phi(h,\ell)=(\phi_1(h,\l),\ldots,\phi_{n+m-1}(h,\l)). \]
To simplify the notation, throughout we denote
\[ h'= \phi(h,\ell)_{1:n}, \qquad \ell'=\phi(h,\ell)_{n+1:n+m-1}.  \]
As $p'_{\phi(h,\l)}=  p'_{(h',\l')}= p_{(h',\iota(\ell'))}$ by Proposition~\ref{prop:jac}, all we need is to show that 
\begin{equation*}
p_{(h,\ell)} =  p_{(h', \iota(\ell'))}. 
\end{equation*}
We define the last $n+m-4$ coordinates of $\phi$ to be the identity in the sense that 
\begin{equation}\label{eq:id}
\begin{aligned}
h_i'= \phi_i(h,\l)&:= h_i  &  \text{ for }&  i=4,\ldots n,\\
\ell_i' = \phi_{i+n}(h,\l)&:= \l_{i} &  \text{ for } & i=1,\ldots m-1.
\end{aligned}
\end{equation}	
As $\beta_{(h_{4:n},\l)}$ in \eqref{eq:psplitting} does not depend on $h_1,h_2,h_3$ nor on $\ell_m$ by Proposition~\ref{prop:charpolycoeffG}, \eqref{eq:id} readily gives that
\[\beta_{(h,\ell)} =  \beta_{(h', \iota(\ell'))}. \]
Therefore, we need to define $h_1'=\phi_1,h_2'=\phi_2,h_3'=\phi_3$ such that 
\begin{equation}\label{eq:alpha2}
\alpha_{(h,\ell)} =  \alpha_{(h', \iota(\ell'))}. 
\end{equation}
By \eqref{eq:id} again, the minors $T_i$ and the polynomials $S_k^{(1)}$, $S_k^{(2)}$, $S_k^{(3)}$  from Proposition~\ref{prop:charpolycoeffG} are mapped to themselves by $\phi$, as they do not depend on $h_1,h_2,h_3,\ell_m$. 
Similarly, recalling that in the description of $J(h,\ell)$ from Proposition~\ref{Jacblock} we consider $L:=E'\pi(\ell)=E'\ell'$, $L$ is mapped to itself by $\phi$. 
This holds independently of how we define $h_1',h_2',h_3'$. 

As the last entry of $\iota(\ell')$ is zero, Proposition~\ref{prop:charpolycoeffG} now gives that  \eqref{eq:alpha2} holds if for
\[ \Lambda :=  (\l_m+L_{r-1}) h_1+ (\l_m+L_{r-1})h_2 + (\l_m+L_{r-2})h_3,\] 
$h_1',h_2',h_3'$ satisfy    that
\begin{equation}\label{eq:tosee}
\begin{aligned}
\Lambda &=  L_{r-1} h'_1 +L_{r-1}h'_2 +L_{r-2}h'_3 \\
(\ell_m+L_{r-1})h_1h_3 &= L_{r-1}h'_1h'_3  \\ 
(\ell_m+ L_{r-1})h_2h_3 &= L_{r-1}h'_2h'_3 \quad (\text{if }\delta=0)\\
 (\l_m+L_{r-1})  h_1h_2  &=  L_{r-1}  h'_1h'_2   \quad (\text{if } \delta=1). 
\end{aligned}
\end{equation}

In order to define $h_1',h_2',h_3'$,  we separate the cases $\delta=1$ and $\delta=0$. 
If $\delta=1$, we define 
	 {\small	\begin{align*}
		h_1'=\phi_1(h,\l)&:=\frac{\Lambda-\sqrt{\gamma_1}}{2L_{r-1}},\qquad
		h_2'=\phi_2(h,\l):=\frac{(\Lambda+\sqrt{\gamma_1})h_2}{2\gamma_2},\qquad
		h_3'=\phi_3(h,\l):=\frac{(\Lambda +\sqrt{\gamma_1})h_3}{2\gamma_2},
		\end{align*}}%
where
\begin{align*}
		\gamma_1&:=\Lambda^2-4h_1 (\l_m + L_{r-1}) \gamma_2,  &
		\gamma_2&:= h_2L_{r-1}+h_3L_{r-2}. 
	\end{align*}
With this, we have 
 {\small\begin{align*}
L_{r-1}  h'_1h'_j &= \frac{L_{r-1}(\Lambda-\sqrt{\gamma_1})(\Lambda+\sqrt{\gamma_1})h_j}{4 L_{r-1} \gamma_2}
= \frac{(\Lambda^2-\gamma_1)h_j}{4 \gamma_2}  = h_1 h_j(\l_m + L_{r-1}), \quad j=2,3, 
\end{align*}}%
and 
 {\small\begin{align*}
L_{r-1} (h'_1 +  h'_2) +L_{r-2}h'_3 &= 
\frac{\Lambda-\sqrt{\gamma_1}}{2} + \frac{(\Lambda+\sqrt{\gamma_1})(h_2 L_{r-1} + h_3 L_{r-2})}{2\gamma_2} = \frac{(\Lambda-\sqrt{\gamma_1}) + (\Lambda+\sqrt{\gamma_1}) }{2 }  = \Lambda,
\end{align*}}%
as desired. 	So \eqref{eq:tosee} holds for $\delta=1$ with this definition of $\phi$.
 
\medskip
We now consider $\delta=0$, and define similarly
 {\small
 	\begin{align}\label{eq:phidelta0}
 h_1'=	\phi_1(h,\l)&:=\frac{(\Lambda+\sqrt{\gamma_1})h_1}{2\gamma_2},\qquad
 h_2'= 	\phi_2(h,\l):=\frac{(\Lambda+\sqrt{\gamma_1})h_2}{2\gamma_2},\qquad
  h_3'=	\phi_3(h,\l):=\frac{\Lambda-\sqrt{\gamma_1}}{2L_{r-2}},
 	\end{align}
 }%
 where 
 {\small
 	\begin{align*}
 	\gamma_1&:=  \Lambda^2-4 \frac{L_{r-2}}{L_{r-1} }\gamma_2 h_3 (\l_m+L_{r-1}) & 
 	\gamma_2&:=L_{r-1} (h_1+h_2).
 	\end{align*}
 }
With this, we have  for  $j=1,2$:
 {\small\begin{align*}
L_{r-1}  h'_jh'_3 &= \frac{L_{r-1}(\Lambda^2-\gamma_1)h_j}{4L_{r-2} \gamma_2}
= h_j h_3(\l_m + L_{r-1}).
\end{align*}}%
Finally, 
 {\small\begin{align*}
L_{r-1} (h'_1 +  h'_2) +L_{r-2}h'_3 &= 
 \frac{(\Lambda +\sqrt{\gamma_1})(h_1+h_2)L_{r-1}}{2\gamma_2}  + \frac{(\Lambda-\sqrt{\gamma_1})L_{r-2} }{2 L_{r-2}} 
 =  \frac{\Lambda-\sqrt{\gamma_1} + \Lambda+\sqrt{\gamma_1} }{2 }  = \Lambda.
 \end{align*}}%
 So \eqref{eq:tosee} holds for $\delta=0$ with this definition of $\phi$ and this concludes the proof.
\end{proof}

\begin{example}\label{ex:constructive}
We revisit Example~\ref{network:processivedistributive}. Based on parameter values reported in \cite{CMS}, the network 
in  \eqref{eq:network5} displays a pair of purely imaginary eigenvalues for
{\small \begin{align*}
	(\k_1,\ldots,\k_{10})& =(1,1,1,3,1,100,1,100,1,0.9)\\
	(x_1,\ldots,x_9)& \approx (4.22647, 0.065134, 2.08295, 22.752, 1.42669, 4.40176, 0.0440176,
	4.40176, 4.89085),
	\end{align*}}%
 if species are ordered as $E,F,S_0,S_1,S_2,ES_0,FS_1,ES_1,FS_1$. In terms of the convex parameters with extreme matrix 
{\small	\[E= \begin{bmatrix}
        1 & 0 & 1 & 1 & 0 & 1 & 1 & 1 & 0& 1\\
        0 & 0 & 0 & 0 & 0 & 0 & 0 & 1 & 1& 0\\
	0 & 0 & 0 & 1 & 1 & 0 & 0 & 0 & 0& 0 \\
	1 & 1 & 0 & 0 & 0 & 0 & 0 & 0 & 0& 0
	\end{bmatrix}^\top,\]}%
	 these values correspond to
{\small \begin{align*}
(h_1,\ldots,h_{9}) & \approx (0.236604, 15.353, 0.480089, 0.0439522, 0.700923, 0.227182, 22.7182,
0.227182, 0.204464), \\
(\l_1,\ldots,\l_4) & \approx (4.40176, 4.89085, 0.0440176,4.40176).
\end{align*}}%
Following the construction of $\phi$ in the proof of Theorem~\ref{thm:charpol} in \eqref{eq:id} and \eqref{eq:phidelta0}, for the reduced network~\eqref{eq:network6}, convex parameters $\phi(h,\ell)=(h',\ell')\in \R^9_{>0}\times \R^3_{>0}$ for which $J'(h',\ell')$ has a pair of purely imaginary eigenvalues  are
{\small \begin{align*}
(h'_1,\ldots,h'_{9}) & \approx (0.559811, 15.353, 1.1359, 0.0439522, 0.700923, 0.192037, 22.7182, \
0.227182, 0.204464)\\
(\l_1',\l_2',\l'_3) & \approx (4.40176, 4.89085, 0.0440176),
\end{align*}}%
which correspond to the following reaction rate constants and steady state:
{\small\begin{align*}
(\k_1,\k_3,\ldots,\k_{10}) & \approx (2.79903,0.8453,3,1,100,1, 100, 1, 0.9)\\
(x_1,\ldots,x_9) & \approx (1.78632, 0.065134, 0.880358, 22.752, 1.42669, 5.20734, 0.0440176, \
4.40176, 4.89085).
\end{align*}}%
By construction, the choices of reaction rate constants and steady states for \eqref{eq:network5} and \eqref{eq:network6} give rise to the same characteristic polynomial, and hence have the same roots. 
\end{example}

{\small 

\begin{thebibliography}{10}

\bibitem{Clarke}
B.~L. Clarke.
\newblock {\em {Stability of Complex Reaction Networks}}, volume~43 of {\em
  Advances in Chemical Physics}.
\newblock John Wiley {\&} Sons, Inc., Hoboken, NJ, USA, 1980.

\bibitem{CFM}
C.~Conradi, E.~Feliu, and M.~Mincheva.
\newblock On the existence of hopf bifurcations in the sequential and
  distributive double phosphorylation cycle.
\newblock {\em Math. Biosci. Eng.}, 1(17):494--513, 2020.

\bibitem{CMS}
C.~Conradi, M.~Mincheva, and A.~Shiu.
\newblock Emergence of oscillations in a mixed-mechanism phosphorylation
  system.
\newblock {\em Bull. Math. Biol.}, 81(6):1829--1852, 2019.

\bibitem{conradi-shiu-review}
C.~Conradi and A.~Shiu.
\newblock Dynamics of post-translational modification systems: recent progress
  and future directions.
\newblock {\em Biophys. J.}, 114(3):507--515, 2018.

\bibitem{KW}
M.~El~Kahoui and A.~Weber.
\newblock Deciding hopf bifurcations by quantifier elimination in a
  software-component architecture.
\newblock {\em J. Symbolic Comput.}, 30(2):161–179, 2000.

\bibitem{weber:osc}
H.~Errami, M.~Eiswirth, D.~Grigoriev, W.~M. Seiler, T.~Sturm, and A.~Weber.
\newblock Detection of hopf bifurcations in chemical reaction networks using
  convex coordinates.
\newblock {\em J. Comput. Phys.}, 291:279 -- 302, 2015.

\bibitem{feinberg-book}
M.~Feinberg.
\newblock {\em Foundations of Chemical Reaction Network Theory}.
\newblock Springer, first edition, 2019.

\bibitem{FKWY}
E.~Feliu, N.~Kaihnsa, T.~de~Wolff, and O.~Yürük.
\newblock The kinetic space of multistationarity in dual phosphorylation.
\newblock {\em J. Dyn. Diff. Equat.}, 34:825--852, 2022.

\bibitem{rendall-feliu-wiuf}
E.~Feliu, A.~D. Rendall, and C.~Wiuf.
\newblock A proof of unlimited multistability for phosphorylation cycles.
\newblock {\em Nonlinearity}, 33(11):5629, 2020.

\bibitem{FHC14}
D.~Flockerzi, K.~Holstein, and C.~Conradi.
\newblock {N-site Phosphorylation Systems with 2N-1 Steady States}.
\newblock {\em Bull. Math. Biol.}, 76(8):1892--1916, 2014.

\bibitem{ges05}
K.~Gatermann, M.~Eiswirth, and A.~Sensse.
\newblock Toric ideals and graph theory to analyze hopf bifurcations in mass
  action systems.
\newblock {\em J. Symb. Comput.}, 40(6):1361 -- 1382, 2005.

\bibitem{GMS}
J.~Guckenheimer, M.~Myers, and B.~Sturmfels.
\newblock Computing hopf bifurcations i.
\newblock {\em SIAM Journal on Numerical Analysis}, 34(1):1--21, 1997.

\bibitem{G-PNAS}
J.~Gunawardena.
\newblock {{M}ultisite protein phosphorylation makes a good threshold but can
  be a poor switch}.
\newblock {\em Proc. Natl. Acad. Sci. U.S.A.}, 102:14617--14622, 2005.

\bibitem{G-distributivity}
J.~Gunawardena.
\newblock {{D}istributivity and processivity in multisite phosphorylation can
  be distinguished through steady-state invariants}.
\newblock {\em Biophys. J.}, 93:3828--3834, 2007.

\bibitem{rendall-2site}
J.~Hell and A.~D. Rendall.
\newblock A proof of bistability for the dual futile cycle.
\newblock {\em Nonlinear Anal. Real World Appl.}, 24:175--189, 2015.

\bibitem{rendall-MAPK}
J.~Hell and A.~D. Rendall.
\newblock Dynamical features of the map kinase cascade.
\newblock In Cham Springer, editor, {\em Modeling Cellular Systems}, volume~11.
  2017.

\bibitem{Liu}
W.~M. Liu.
\newblock Criterion of hopf bifurcations without using eigenvalues.
\newblock {\em Journal of Mathematical Analysis and Applications}, 182(1):250
  – 256, 1994.

\bibitem{maple}
{Maplesoft, a division of Waterloo Maple Inc..}
\newblock Maple.
\newblock \url{www.maplesoft.com/products/Maple/}.

\bibitem{Markevich-mapk}
N.~I. Markevich, J.~B. Hoek, and B.~N. Kholodenko.
\newblock {{S}ignaling switches and bistability arising from multisite
  phosphorylation in protein kinase cascades}.
\newblock {\em J. Cell Biol.}, 164:353--359, 2004.

\bibitem{qiao:oscillations}
L.~Qiao, R.~B. Nachbar, I.~G. Kevrekidis, and S.~Y. Shvartsman.
\newblock {{B}istability and oscillations in the {H}uang-{F}errell model of
  {M}{A}{P}{K} signaling}.
\newblock {\em PLoS Comput. Biol.}, 3(9):1819--1826, 2007.

\bibitem{reference.wolfram_2024_reduce}
Wolfram Research.
\newblock {Mathematica}.
\newblock \url{https://www.wolfram.com/mathematica/}, 2023.

\bibitem{SH09}
C.~Salazar and T.~Hofer.
\newblock {{M}ultisite protein phosphorylation--from molecular mechanisms to
  kinetic models}.
\newblock {\em FEBS J.}, 276(12):3177--3198, Jun 2009.

\bibitem{SK}
T.~Suwanmajo and J.~Krishnan.
\newblock Mixed mechanisms of multi-site phosphorylation.
\newblock {\em J. R. Soc. Interface}, 12, 2015.

\bibitem{TF}
M.~L. Telek and E.~Feliu.
\newblock Topological descriptors of the parameter region of multistationarity:
  Deciding upon connectivity.
\newblock {\em Plos. Comput. Biol.}, 19(3):1--38, 2023.

\bibitem{volpert}
A.~I. Vol'pert.
\newblock Differential equations on graphs.
\newblock {\em Math. {USSR}-{S}b}, 17:571–582, 1972.

\bibitem{Wang:2008dc}
L.~Wang and E.~D. Sontag.
\newblock {On the number of steady states in a multiple futile cycle}.
\newblock {\em J. Math. Biol.}, 57(1):29--52, 2008.

\bibitem{yang}
X.~Yang.
\newblock Generalized form of hurwitz-routh criterion and hopf bifurcation of
  higher order.
\newblock {\em Applied Mathematics Letters}, 15(5):615--621, 2002.

\end{thebibliography}

}

\end{document}